\documentclass[a4paper]{amsart}
\usepackage[all]{xy}
\usepackage{ifthen}
\usepackage{amssymb}
\usepackage{mathabx}

\SelectTips{eu}{}
\calclayout 

\title[Approximations and adjoints in homotopy
  categories]{Approximations and adjoints in homotopy categories}

\thanks{This research was partially supported by DFG Schwerpunkt SPP
  1388.}

\author[Henning Krause]{Henning Krause} 
\address{Fakult\"at f\"ur Mathematik\\ Universit\"at
  Bielefeld\\ 33501 Bielefeld\\ Germany.}
\email{hkrause@math.uni-bielefeld.de}


\theoremstyle{plain}
\newtheorem{lem}{Lemma}[section]
\newtheorem{prop}[lem]{Proposition}
\newtheorem{cor}[lem]{Corollary}
\newtheorem{conj}[lem]{Conjecture}
\newtheorem{thm}[lem]{Theorem}

\theoremstyle{remark}

\theoremstyle{definition}
\newtheorem{rem}[lem]{Remark}
\newtheorem{exm}[lem]{Example}
\newtheorem{defn}[lem]{Definition}
\newtheorem*{ackn}{Acknowledegement}

\numberwithin{equation}{section}


\DeclareMathOperator{\Abel}{Ab}

\DeclareMathOperator{\Qcoh}{Qcoh}
\DeclareMathOperator{\Coker}{Coker}

\DeclareMathOperator{\Hom}{Hom}
\DeclareMathOperator{\Ext}{Ext}
\DeclareMathOperator{\Ex}{Ex}
\DeclareMathOperator{\Lex}{Lex}
\DeclareMathOperator{\fp}{fp}
\DeclareMathOperator{\Fpinj}{Fpinj}
\DeclareMathOperator{\Mod}{Mod}

\DeclareMathOperator{\Ker}{Ker}

\DeclareMathOperator{\card}{card}

\DeclareMathOperator{\Inj}{Inj}
\DeclareMathOperator{\proj}{proj}
\DeclareMathOperator{\Proj}{Proj}

\DeclareMathOperator{\Mor}{Mor}
\DeclareMathOperator{\Flat}{Flat}

\renewcommand{\mod}{\operatorname{mod}}


\newcommand{\colim}[1]{\mathop{\mathrm{colim}}\limits_{#1}}

\newcommand{\lto}[1][{}]{\stackrel{#1}{\longrightarrow}} 
 
\newcommand{\xto}{\xrightarrow}


\newcommand{\Ab}{\mathrm{Ab}} 

\newcommand{\op}{\mathrm{op}}
 
\newcommand{\inc}{\mathrm{inc}}
\newcommand{\can}{\mathrm{can}}

\newcommand{\ac}{\mathrm{ac}}
\newcommand{\pac}{\mathrm{pac}}
\newcommand{\pure}{\mathrm{pur}}


\def\a{\alpha}
\def\b{\beta}

\def\k{\kappa}
\def\la{\lambda}

\def\A{{\mathcal A}}
\def\B{{\mathcal B}}
\def\C{{\mathcal C}}

\def\Oc{{\mathcal O}}

\def\Sc{{\mathcal S}}

\def\T{{\mathcal T}}

\def\bbX{\mathbb X}

\def\bbZ{\mathbb Z}

\def\bfC{\mathbf C}
\def\bfD{\mathbf D}

\def\bfK{\mathbf K}

\hyphenation{pre-sent-a-ble}

\begin{document}

\begin{abstract}
We provide a criterion for the existence of right approximations in
cocomplete additive categories; it is a straightforward generalisation
of a result due to El Bashir. This criterion is used to construct
adjoint functors in homotopy categories. Applications include the
study of (pure) derived categories. For instance, it is shown that the
pure derived category of any module category is compactly generated.
\end{abstract}

\maketitle

\section{Introduction}

This note is motivated by recent work of Neeman and Murfet where
derived categories of flat modules are studied in various settings
\cite{Ne2008,Ne2006,Mu}.  One of the essential ingredients of their
work is the construction of approximations and adjoints for categories of
complexes.  It turns out that El Bashir's proof of the flat cover
conjecture \cite{BEE,El} leads to a systematic approach yielding
such approximations and adjoints. It is our aim in the present work
to explain this new approach and some of its consequences.

Let us mention a few applications in this introduction because they
are easily stated. Given any additive category $\A$, we denote by
$\bfK(\A)$ the category of cochain complexes in $\A$ with morphisms
the cochain maps up to homotopy. For a fixed Quillen exact structure
on $\A$, we denote by $\bfD(\A)$ the corresponding derived category.

The first result is an analogue of the flat cover conjecture for
complexes of quasi-coherent sheaves on a scheme; it has been
established in the affine case by Neeman \cite{Ne2006} and Enochs et
al.\ \cite{BEIJR}, and for noetherian schemes by Murfet \cite{Mu}.

\begin{thm}
\pushQED{\qed} Given any scheme $\bbX$, the inclusion $\bfK(\Flat
\bbX)\to\bfK(\Qcoh \bbX)$ admits a right adjoint.\qedhere
\end{thm}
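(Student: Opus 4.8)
The plan is to reduce the assertion to an approximation problem and then feed it into the generalisation of El Bashir's criterion announced above. Write $\T=\bfK(\Qcoh\bbX)$ and $\Sc=\bfK(\Flat\bbX)$; the latter is a full triangulated subcategory, and it is closed under all coproducts because $\Flat\bbX$ is closed under coproducts in $\Qcoh\bbX$. The inclusion $\Sc\to\T$ admits a right adjoint precisely when every object $Y$ of $\T$ admits a coreflection, that is, a morphism $f\colon F\to Y$ with $F\in\Sc$ such that $\Hom_\T(W,f)$ is bijective for all $W\in\Sc$; equivalently, $Y$ fits into a triangle $F\to Y\to Z\to\Si F$ with $F\in\Sc$ and $Z\in\Sc^\perp$. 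Note that a mere right $\Sc$-approximation, where $\Hom_\T(W,f)$ is only surjective, is not enough: the long exact sequence gives $\Hom_\T(W,Z)\cong\ker\Hom_\T(W,\Si f)$, so one must also control injectivity. Thus it suffices to construct such a triangle for every complex $Y$ of quasi-coherent sheaves.

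To bring the criterion to bear I would work at the level of complexes. Since $\bbX$ is a scheme, $\Qcoh\bbX$ is a Grothendieck category, hence the category $\mathbf{C}(\Qcoh\bbX)$ of cochain complexes is again a cocomplete additive (indeed Grothendieck) category, with coproducts and filtered colimits formed degreewise and exact. Inside it sits the class $\mathbf{C}(\Flat\bbX)$ of complexes with flat terms, which is closed under coproducts and under filtered colimits because $\Flat\bbX$ is and colimits are degreewise. The remaining and decisive hypothesis of the criterion is that this class be generated by a set under filtered colimits. For this I would invoke that $\Flat\bbX$ is a deconstructible (Kaplansky) class: there is an infinite cardinal $\kappa$, depending on $\bbX$, such that every flat quasi-coherent sheaf is the union of a continuous well-ordered chain of flat subsheaves with $\kappa$-presentable flat successive quotients. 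The set of isomorphism classes of $\kappa$-presentable flat sheaves then generates $\Flat\bbX$, and, bounding the supports of complexes by $\kappa$ as well, one obtains a generating set for $\mathbf{C}(\Flat\bbX)$.

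With the three hypotheses verified, the criterion produces, for each complex $Y$, a right $\mathbf{C}(\Flat\bbX)$-approximation $f\colon F\to Y$ in the category of complexes. The final step is to upgrade $f$ to a coreflection in the homotopy category. Here I would use that $\mathbf{C}(\Flat\bbX)$ carries the degreewise-split Frobenius exact structure whose projective-injective objects are the contractible complexes of flats, with stable category $\bfK(\Flat\bbX)$; the homotopies that are killed on passing to $\T$ are absorbed by enlarging $F$ with such contractible complexes, which remain complexes of flats. One then checks that the cone $Z$ of $f$ lies in $\Sc^\perp$, heuristically because the kernel of the approximation is pure-acyclic, so that $\Hom_\T(W,f)$ becomes bijective and not merely surjective.

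I expect the two genuine obstacles to be exactly these last points. The set-theoretic heart of the matter is the deconstructibility of $\Flat\bbX$ for an arbitrary --- possibly non-affine and non-noetherian --- scheme; over an affine scheme this is Lazard's description of flat modules as filtered colimits of finite free modules, but on a general scheme flat quasi-coherent sheaves lack such a local model and one must produce the cardinal $\kappa$ and the filtration directly. The second obstacle is the passage from a complex-level approximation to a genuine homotopy coreflection: controlling the cone so that it becomes orthogonal to $\bfK(\Flat\bbX)$ rather than merely killing the surjectivity defect. Once both are in place, the approximation criterion together with the Frobenius description of $\bfK(\Flat\bbX)$ yields the desired right adjoint.
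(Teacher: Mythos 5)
Your proposal contains two genuine gaps, and the first stems from dismissing the very tool that makes the statement tractable. You assert that ``a mere right $\Sc$-approximation \dots\ is not enough'' and that one must instead construct, for each complex $Y$, a triangle $F\to Y\to Z$ with $F\in\Sc$ and $Z\in\Sc^\perp$. In fact, the existence of right $\Sc$-approximations for \emph{all} objects of $\T$ is enough: this is Neeman's result (Proposition~\ref{pr:N}, i.e.\ \cite[Proposition~1.4]{Ne2006}), which says that for a full triangulated subcategory $\Sc\subseteq\T$ such that $\Sc$ and $\T$ have split idempotents, the inclusion admits a right adjoint if and only if every object of $\T$ admits a right $\Sc$-approximation; split idempotents come for free here, since both homotopy categories admit countable coproducts. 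Having discarded this, you are forced to upgrade an approximation to a coreflection by hand, and that is exactly where your argument breaks down: you enlarge $F$ by contractible complexes of flats and claim the cone lands in $\Sc^\perp$ ``heuristically because the kernel of the approximation is pure-acyclic''. Neither assertion is justified --- the kernel of a $\bfC(\Flat\bbX)$-approximation need not be pure-acyclic, and pure-acyclicity of the kernel would not by itself put the cone in $\Sc^\perp$. Since producing such a triangle for every $Y$ is literally equivalent to the existence of the right adjoint, this step, as written, assumes what is to be proved.

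The second gap is the set-theoretic input. You verify the hypothesis of the approximation criterion (``generated by a set under filtered colimits'') via deconstructibility of $\Flat\bbX$, and you yourself flag that producing the cardinal $\kappa$ and the filtrations on an arbitrary, possibly non-affine and non-noetherian, scheme is an unresolved obstacle; that is indeed the hard content of the approach of \cite{EE}, and your proof does not supply it. The paper's route avoids it entirely: Theorem~\ref{th:adj} (via Theorem~\ref{th:approx}) only requires $\Flat\bbX$ --- hence $\bfC(\Flat\bbX)$, colimits in $\bfC(\Qcoh\bbX)$ being degreewise --- to be closed under filtered colimits and under $\aleph_0$-pure subobjects, and the latter is elementary: an $\aleph_0$-pure subobject of a flat quasi-coherent sheaf is flat, being a filtered colimit of split monomorphisms, both of which are preserved by tensoring. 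The heavy set theory is then encapsulated once and for all in El Bashir's counting of $\a$-terminal morphisms (\cite[Theorem~2.1]{El}, resting on \cite[Theorem~5]{BEE}) together with Proposition~\ref{pr:acc}, rather than re-proved for flat sheaves. In short, both ``genuine obstacles'' you identify are obstacles only for your route; the paper dissolves the first by Proposition~\ref{pr:N} and the second by trading deconstructibility for closure under pure subobjects.
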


This result is a consequence of the more general Theorem~\ref{th:adj}
which is formulated in the setting of locally presentable categories
in the sense of Gabriel and Ulmer \cite{GU}. Roughly speaking, any
cocomplete category with a sufficiently nice set of generators is
locally presentable. In particular, Grothendieck abelian categories
and module categories are locally presentable.

Most of the present work is done for locally presentable categories,
including the following result which establishes the `existence' of
the derived category of an exact category that is locally presentable.

\begin{thm}
\pushQED{\qed} Let $\A$ be an exact category that is locally
presentable, and suppose that exact sequences are closed under
filtered colimits.  Then the canonical functor $\bfK(\A)\to\bfD(\A)$
admits a fully faithful right adjoint. In particular, the category
$\bfD(\A)$ has small Hom-sets.\qedhere
\end{thm}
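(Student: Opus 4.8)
The plan is to present $\bfD(\A)$ as the Verdier quotient $\bfK(\A)/\bfK_{\ac}(\A)$ by the subcategory $\bfK_{\ac}(\A)$ of complexes that are acyclic for the given exact structure, and then to recognise the asserted right adjoint as a Bousfield colocalisation. Recall the standard dictionary: the canonical functor $Q\colon\bfK(\A)\to\bfD(\A)$ admits a fully faithful right adjoint exactly when the inclusion $\bfK_{\ac}(\A)\hookrightarrow\bfK(\A)$ admits a right adjoint $\Gamma$; equivalently, when every complex $X$ sits in a triangle $\Gamma X\to X\to LX\to\Sigma\Gamma X$ with $\Gamma X$ acyclic and $LX$ in the right orthogonal $\bfK_{\ac}(\A)^{\perp}$ of homotopically injective complexes. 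In that case the right adjoint of $Q$ is the inverse of the equivalence $\bfD(\A)\xrightarrow{\sim}\bfK_{\ac}(\A)^{\perp}$ followed by the inclusion, and its full faithfulness is automatic. Thus the theorem reduces to constructing the coreflection $\Gamma X\to X$ onto the acyclic complexes, functorially in $X$.

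To build $\Gamma$ I would pass to the category of cochain complexes $\bfC(\A)$ and invoke the approximation machinery. As $\A$ is locally presentable, so is $\bfC(\A)$; in particular it is cocomplete and idempotent complete, which already guarantees that $\bfK_{\ac}(\A)$ is a thick subcategory of $\bfK(\A)$. The hypothesis that exact sequences are closed under filtered colimits ensures that the acyclic complexes are closed under filtered colimits, and hence --- writing an arbitrary coproduct as the filtered colimit of its finite subcoproducts --- under all coproducts; so $\bfK_{\ac}(\A)$ is localising. This is precisely the situation governed by the approximation criterion of this paper (generalising El Bashir) and by the general adjoint theorem, Theorem~\ref{th:adj}: local presentability provides a regular cardinal $\kappa$ and a \emph{set} of $\kappa$-presentable acyclic complexes from which every acyclic complex arises as a $\kappa$-filtered colimit, and this `generation by a set', together with closure under filtered colimits, is exactly what the theorem needs in order to produce a right adjoint $\Gamma$ to the inclusion $\bfK_{\ac}(\A)\hookrightarrow\bfK(\A)$.

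Given $\Gamma$, I would complete the counit $\Gamma X\to X$ to a triangle $\Gamma X\to X\to LX\to\Sigma\Gamma X$ and check that $LX\in\bfK_{\ac}(\A)^{\perp}$: applying $\Hom_{\bfK(\A)}(S,-)$ for an acyclic $S$ and using that $\Gamma$ is a coreflection at both $X$ and $\Sigma X$ (so that $\Sigma^{\pm1}S$ are again acyclic) forces $\Hom_{\bfK(\A)}(S,LX)=0$. Hence the required triangles exist and $Q$ acquires a fully faithful right adjoint $R\colon\bfD(\A)\to\bfK(\A)$ with essential image the homotopically injective complexes. The final assertion is then immediate from the adjunction: for complexes $X,Y$ one has $\Hom_{\bfD(\A)}(QX,QY)\cong\Hom_{\bfK(\A)}(X,RQY)$, and the right-hand side is a set because a morphism in $\bfK(\A)$ is a homotopy class of cochain maps, which form a subgroup of $\prod_{n}\Hom_{\A}(X^{n},(RQY)^{n})$. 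Therefore $\bfD(\A)$ has small Hom-sets.

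I expect the genuine difficulty to lie entirely in the invocation of the approximation theorem, everything else being formal. The crux is verifying that the acyclic complexes are `generated by a set': one must produce the cardinal estimate --- the analogue, in this exact and complex-wise setting, of the Kaplansky-type bound underlying El Bashir's solution of the flat cover conjecture --- guaranteeing that any acyclic complex mapping to $X$ can be approximated by acyclic subobjects drawn from a fixed set. Both hypotheses are indispensable here: local presentability supplies the presentable generators, while closure of exactness under filtered colimits is what keeps the approximants acyclic throughout the transfinite construction. A secondary point demanding care is the transition from honest approximations in $\bfC(\A)$ to an adjoint on the homotopy category $\bfK(\A)$, where only coproducts and homotopy colimits, rather than all filtered colimits, are available.
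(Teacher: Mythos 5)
Your overall strategy matches the paper's: reduce via Bousfield colocalisation (Proposition~\ref{pr:N} together with the standard quotient argument and \cite[Proposition~I.1.3]{GZ}) to producing a right adjoint for the inclusion $\bfK_\ac(\A)\to\bfK(\A)$, and obtain that adjoint by applying the approximation machinery (Theorem~\ref{th:approx}, respectively Theorem~\ref{th:adj}) to the subcategory $\bfC_\ac(\A)$ of the locally presentable category $\bfC(\A)$. That reduction, the triangle argument showing the counit cofibre lies in $\bfK_\ac(\A)^\perp$, and the small Hom-sets conclusion are all correct and formal, exactly as you say.

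The gap lies in the one step you wave at rather than prove: you assert that ``local presentability provides a regular cardinal $\kappa$ and a set of $\kappa$-presentable acyclic complexes from which every acyclic complex arises as a $\kappa$-filtered colimit.'' This is not automatic. Theorem~\ref{th:adj} requires, besides closure under filtered colimits, that $\bfC_\ac(\A)$ be closed under $\a$-pure subobjects or $\a$-pure quotients for some regular cardinal $\a$ (equivalently, by Proposition~\ref{pr:acc}, that it be accessible, i.e.\ generated by a set under filtered colimits). A full subcategory of a locally presentable category that is closed under filtered colimits need \emph{not} be accessible in ZFC --- statements of that shape are tied to Vop\v{e}nka's principle --- so this hypothesis must be verified by hand for the acyclic complexes, and that is precisely where the exact structure enters in an essential way. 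The paper does this in Lemma~\ref{le:derived}: using that kernels and cokernels are preserved by $\a$-filtered colimits, that every $\a$-pure monomorphism is admissible (it is an $\a$-filtered colimit of split monomorphisms, which are admissible, and exact sequences are closed under such colimits), and the composition/cancellation axioms for admissible monomorphisms, one shows that isomorphisms and admissible monomorphisms are closed under $\a$-pure subobjects in $\Mor\A$, whence acyclicity (reformulated as: $\Coker d^{n-2}\to\Ker d^n$ invertible and $\Ker d^n\to X^n$ admissible for all $n$) passes to $\a$-pure subcomplexes. You do identify this ``generation by a set'' as the crux at the end of your proposal, but you locate the difficulty inside the approximation theorem itself, as a Kaplansky-type cardinal bound; that bound is already packaged in Theorem~\ref{th:approx} (via El Bashir). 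What is genuinely missing from your argument is the verification that $\bfC_\ac(\A)$ satisfies that theorem's purity hypothesis, and no amount of appealing to local presentability alone will supply it.
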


Let us consider as an example of particular interest for any ring $A$
the pure exact structure on its module category $\Mod A$; it is the
smallest exact structure on $\Mod A$ such that exact sequences are
closed under filtered colimits. This yields the pure derived category
$\bfD_\pure(\Mod A)$, studied for example by Christensen and Hovey
\cite{ChHo}. Note that it contains the usual derived category
$\bfD(\Mod A)$ as a full triangulated subcategory. The triangulated
category $\bfD(\Mod A)$ is well-known to be compactly generated, and
the inclusion $\proj A\to\Mod A$ of the category of all finitely
generated projective modules induces an equivalence $\bfK^b(\proj
A)\xto{\sim}\bfD(\Mod A)^c$ onto the full subcategory formed by all
compact objects. We have the following analogue for the pure derived
category.

\begin{thm}
\pushQED{\qed} Let $A$ be a ring.  The pure derived category
$\bfD_\pure(\Mod A)$ is a compactly generated triangulated category
and the inclusion $\mod A\to\Mod A$ of the category of all finitely
presented modules induces an equivalence $\bfK^b(\mod
A)\xto{\sim}\bfD_\pure(\Mod A)^c$ onto the full subcategory formed by
all compact objects. Moreover, the canonical functor $\bfD_\pure(\Mod
A)\to\bfD(\Mod A)$ admits left and right adjoints that are fully
faithful.\qedhere
\end{thm}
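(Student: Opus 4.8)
The plan is to realise $\bfD_\pure(\Mod A)$ as a Verdier quotient and to produce its compact generators by hand. Since the pure exact structure on $\Mod A$ is the smallest exact structure whose short exact sequences are closed under filtered colimits, it is locally presentable in the required sense, and the preceding theorem on exact categories applies: $\bfD_\pure(\Mod A)=\bfK(\Mod A)/\bfK_\pac(\Mod A)$ is a triangulated category with small Hom-sets, where $\bfK_\pac(\Mod A)$ denotes the pure-acyclic complexes, i.e.\ those $C$ with $\Hom_A(F,C)$ acyclic for every finitely presented $F$. Because products and coproducts of exact sequences of abelian groups are again exact, $\bfK_\pac(\Mod A)$ is closed under both products and coproducts; hence $\bfD_\pure(\Mod A)$ has arbitrary products and coproducts, all computed in $\bfK(\Mod A)$.

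Next I would establish compact generation. A finitely presented module is pure-projective, and a bounded pure-acyclic complex of pure-projectives is contractible; therefore every pure quasi-isomorphism between bounded complexes of finitely presented modules is a homotopy equivalence, and the canonical functor $\bfK^b(\mod A)\to\bfD_\pure(\Mod A)$ is fully faithful. More generally each bounded complex $P$ of finitely presented modules is left orthogonal to $\bfK_\pac(\Mod A)$, so $\Hom_{\bfD_\pure(\Mod A)}(P,X)\cong\Hom_{\bfK(\Mod A)}(P,X)=H^0\Hom_A(P,X)$ for every $X$; since $\Hom_A(P,-)$ commutes with coproducts, such $P$ are compact. Finally, if $\Hom_{\bfD_\pure(\Mod A)}(M,X[n])\cong H^n\Hom_A(M,X)$ vanishes for all finitely presented $M$ and all $n$, then $\Hom_A(M,X)$ is acyclic for every finitely presented $M$, so $X$ is pure-acyclic, i.e.\ zero in $\bfD_\pure(\Mod A)$. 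Thus the finitely presented modules form a set of compact generators and $\bfD_\pure(\Mod A)$ is compactly generated.

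To identify the compact objects I would invoke Neeman's theorem: $\bfD_\pure(\Mod A)^c$ is the thick subcategory generated by the finitely presented stalks. The fully faithful image of $\bfK^b(\mod A)$ is exactly the triangulated subcategory these stalks generate, so it remains to see that this image is closed under direct summands, equivalently that $\bfK^b(\mod A)$ is idempotent complete. This is the step I expect to be the main obstacle. Idempotents do split in $\mod A$, since a direct summand of a finitely presented module is finitely presented; but—as the failure of idempotent completeness for $\bfK^b(\proj A)$ shows—this does not automatically pass to the bounded homotopy category, so one must use the extra structure of $\mod A$ together with the compactness of the summand to represent it again by a bounded complex of finitely presented modules.

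For the adjoints, I would view the canonical functor $F\colon\bfD_\pure(\Mod A)\to\bfD(\Mod A)$ as the Verdier quotient by $\bfK_\ac(\Mod A)/\bfK_\pac(\Mod A)$, using $\bfK_\pac(\Mod A)\subseteq\bfK_\ac(\Mod A)$. As a quotient $F$ preserves coproducts; and since both $\bfK_\pac(\Mod A)$ and $\bfK_\ac(\Mod A)$ are closed under products, products in both categories are computed in $\bfK(\Mod A)$, so $F$ preserves products as well. Because $\bfD_\pure(\Mod A)$ is compactly generated it satisfies Brown representability and its dual; the former converts preservation of coproducts into a right adjoint, the latter converts preservation of products into a left adjoint. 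Both adjoints are fully faithful, since $F$ is a Verdier localisation and the relevant unit and counit are therefore invertible, which completes the proof.
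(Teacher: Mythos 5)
Most of your proposal is correct, and its middle part is genuinely different from the paper's proof. The realisation of $\bfD_\pure(\Mod A)$ as $\bfK(\Mod A)/\bfK_\pac(\Mod A)$ with small Hom-sets, the direct verification that the finitely presented modules (as stalk complexes) form a set of compact generators, and the Brown-representability argument producing the two fully faithful adjoints all work; the adjoint part is exactly the paper's argument. Your compact-generation argument, by contrast, is more elementary than the paper's: you exploit that a bounded complex of finitely presented (hence pure-projective) modules is left orthogonal to $\bfK_\pac(\Mod A)$, so that Hom out of it is computed in $\bfK(\Mod A)$, whereas the paper deduces everything from the equivalence $\bfK(\Proj\A)\xto{\sim}\bfD_\pure(\A)$ of Theorem~\ref{th:pure} (which rests on Neeman's work on homotopy categories of projective and flat modules) combined with the J{\o}rgensen--Neeman description of $\bfK(\Proj\C)^c$ in Proposition~\ref{pr:comp}. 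Your route avoids that machinery; but the machinery is what lets the paper identify the compact objects with no further effort, and that is precisely where your proof has a hole.

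The gap: to conclude $\bfK^b(\mod A)\xto{\sim}\bfD_\pure(\Mod A)^c$ you need the image of $\bfK^b(\mod A)$ to be a \emph{thick} subcategory, equivalently $\bfK^b(\mod A)$ to be idempotent complete, and your proposal ends by restating this problem rather than solving it. Moreover, the heuristic you give for expecting trouble is factually wrong: $\bfK^b(\proj A)$ \emph{is} idempotent complete, since --- as the paper itself recalls --- it is equivalent to $\bfD(\Mod A)^c$, a thick subcategory of $\bfD(\Mod A)$; the classical failure occurs for bounded complexes of finitely generated \emph{free} modules, whose idempotent completion is $\bfK^b(\proj A)$. There are two clean ways to close the gap. (i) Use the theorem of Balmer and Schlichting (J.~Algebra 236 (2001)) that the bounded derived category of an idempotent complete exact category is idempotent complete: since direct summands of finitely presented modules are finitely presented, $\mod A$ is idempotent complete, and $\bfK^b(\mod A)$ is its bounded derived category for the split exact structure. (ii) Fall back on the paper's identification: Theorem~\ref{th:pure-der} computes \emph{all} compact objects of $\bfD_\pure(\Mod A)$ as $\bfD^b(\widecheck{\mod A})=\bfD^b(\Abel(A))$ via Proposition~\ref{pr:comp}, and Remark~\ref{re:pure-der} (using that $\mod A$ has cokernels, so every object of $\Abel(A)$ has injective dimension at most two) converts this into $\bfK^b(\mod A)$; on that route the thickness question never arises because the compacts are computed outright rather than generated from below. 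Without (i) or (ii), your identification of the compact objects is incomplete.
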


It seems appropriate to comment on the level of generality in this
work. Most of our results are stated for locally presentable
categories, even though the arguments work as well with little extra
effort for the more general class of accessible categories
\cite{MP,AdRo}. Also, no attempt has been made to formulate results in
terms of Quillen model structures. So we tried to keep the exposition
as elementary as possible, concentrating on basic ideas. The
interested and educated reader will have no problems to make the
appropriate generalisations.

This paper is organised as follows. Section~2 is devoted to studying
the existence of right approximations, generalising work of El
Bashir. These results are applied in Section~3 where right adjoints of
functors between homotopy categories are constructed. In particular,
derived categories of exact categories are studied.  The special case
of a pure derived category is discussed in Section~4. The final
Section~5 collects results on left approximations and left
adjoints. We end this note by stating a conjecture on fp-injective
modules which is an analogue of results of Neeman on flat modules.

\begin{ackn}
I would like to thank Jan \v{S}\v{t}ov\'i\v{c}ek and my student
Alexander Schmeding for stimulating discussions on the subject of this
work. In addition, I am grateful to Amnon Neeman for carefully reading and
commenting on a preliminary version of this paper.
\end{ackn}

\section{Right approximations}
Let $\A$ be an additive category and $\B$ a full additive subcategory.
In this section we present conditions such that every object $Y$ in
$\A$ admits a \emph{right $\B$-approximation}, that is, a morphism
$f\colon X\to Y$ with $X$ in $\B$ such that every morphism $X'\to Y$
with $X'$ in $\B$ factors through $f$.  Right approximations in
additive categories were introduced by Auslander and Smal{\o}
\cite{AS}, and independently by Enochs, using the term `precover'
\cite{E}.

The following theorem is our main result in this section; it is a
straightforward generalisation of a result due to El Bashir
\cite[Theorem~3.2]{El}.  In fact, one finds a plethora of criteria for
the existence of right approximations in the literature, generalising
the existence of flat covers in module categories \cite{BEE}. To the
best of our knowldege, all these criteria can be derived from the
following theorem.

\begin{thm}\label{th:approx}
Let $\A$ be an additive category that is locally presentable and let
$\B$ be a full additive subcategory that is closed under filtered
colimits. Suppose there exists a regular cardinal $\a$ such that $\B$
is closed under $\a$-pure subobjects or under $\a$-pure quotients.
Then each object in $\A$ admits a right $\B$-approximation.
\end{thm}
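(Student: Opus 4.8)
The plan is to construct a right $\B$-approximation of a fixed object $Y$ as the induced morphism $\coprod_i X_i\to Y$ coming from a \emph{solution set}: a set of morphisms $X_i\to Y$ with each $X_i$ in $\B$ through which every $g\colon X'\to Y$ with $X'$ in $\B$ factors. First I would observe that $\B$ is closed under arbitrary small coproducts: being a full additive subcategory it contains finite biproducts, and any coproduct is the filtered colimit of its finite sub-biproducts, hence lies in $\B$ by hypothesis. Thus $\coprod_i X_i$ again belongs to $\B$, and the factorization property exhibits $\coprod_i X_i\to Y$ as a right $\B$-approximation. Phrased in the comma category $(\B\downarrow Y)$, which inherits filtered colimits from $\A$, the task is exactly to produce a weakly terminal set.

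Next I would assemble the purity input. In a locally presentable category an $\a$-pure monomorphism is an $\a$-filtered colimit of split monomorphisms, and there is a L\"owenheim--Skolem principle for purity (see \cite{AdRo}): a regular cardinal $\la\ge\a$, which we may take so large that $Y$ is $\la$-presentable, such that every object is the $\la$-filtered union of its $\a$-pure $\la$-presentable subobjects and every $\la$-presentable subobject sits inside such a one. The closure hypothesis then keeps the construction inside $\B$, since an $\a$-pure $\la$-presentable subobject of an object of $\B$ again lies in $\B$ (respectively, the associated pure quotient does, in the second case).

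The decisive step is to bound $(\B\downarrow Y)$ relative to $Y$ by showing that every $g\colon X'\to Y$ factors through a $\la$-presentable object of $\B$ lying over $Y$; as these form a set up to isomorphism, this yields the desired solution set. In the pure-quotient case I would first factor $g$ through its image $X'\twoheadrightarrow I\hookrightarrow Y$, where $I$ is $\la$-presentable because $Y$ is, and then replace $X'$ by a pure quotient: one seeks an $\a$-pure subobject $K'$ of $X'$ contained in $\Ker(X'\to I)$ whose quotient $X'/K'$ is $\la$-presentable, so that $g$ factors as $X'\twoheadrightarrow X'/K'\to I\hookrightarrow Y$ with $X'/K'$ in $\B$. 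In the pure-subobject case one instead builds the approximation directly as the colimit of a continuous transfinite chain of $\a$-pure $\la$-presentable subobjects of the relevant domains, organised by a bookkeeping of the sets $\Hom(-,Y)$ so that every $g$ is eventually absorbed. In either case the regularity of $\la$ and a bound on $\card\Hom(X',Y)$ relative to $\la$ are what force the construction to close off at a set.

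The main obstacle I anticipate is precisely this closing-off. One must guarantee that an $\a$-pure subobject (respectively quotient) of controlled size and with the correct kernel can always be found, and that the locally chosen factorizations cohere along the transfinite construction so that it terminates at a genuine set rather than a proper class. This is the technical heart of El Bashir's argument \cite{El}; the purity hypothesis enters exactly here, both to keep the constructed objects in $\B$ and, through a counting lemma bounding everything by $\la$ relative to the presentability rank of $Y$, to secure termination. Once the solution set is available, assembling the approximation as in the first paragraph is routine.
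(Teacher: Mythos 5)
Your outer frame coincides with the paper's: fix $Y$, produce a \emph{set} of morphisms into $Y$ with domains in $\B$ through which every $g\colon X'\to Y$ with $X'\in\B$ factors, observe that $\B$ is closed under small coproducts (finite biproducts plus closure under filtered colimits), and take the coproduct. The gap is that the decisive step---the existence of such a solution set---is exactly what you leave open, and your two sketches of it do not go through as written. In the pure-quotient case, \cite[Theorem~5]{BEE} does not provide what you ask for: when $X'$ is large relative to $Y$ it yields a \emph{single} nonzero $\a$-pure subobject of $X'$ killed by $g$, not an $\a$-pure $K'\subseteq\Ker g$ with $X'/K'$ of bounded presentability. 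Passing from the former to the latter is a transfinite/Zorn argument, and this is what the paper actually supplies: it introduces \emph{$\a$-terminal} morphisms, shows they form a set (Proposition~\ref{pr:ElBashir}, proved for module categories and transported to $\A$ along the fully faithful embedding $X\mapsto\Hom_\A(-,X)|_\C$ into $\Mod\C$, which must be checked to preserve $\a$-filtered colimits, $\a$-pure epimorphisms and hence $\a$-terminality), and then applies Zorn's lemma to the factorisations $f=vu$ with $u$ an epimorphism: chains have upper bounds because $\B$ is closed under filtered colimits and colimits of epimorphisms are epimorphisms, the poset is small by \cite[Satz~7.14]{GU}, and a maximal pair is $\a$-terminal precisely because $\B$ is closed under $\a$-pure quotients. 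Two further slips: the image $I$ of $g$ need not be $\la$-presentable just because $Y$ is (presentability does not pass to subobjects; this is repairable, for the fixed $Y$, via well-poweredness), and arguing with images, kernels and cardinalities as if $\A$ were a module category requires the $\Mod\C$ transport just mentioned, which your proposal never addresses.

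The more serious gap is the pure-subobject case, where your plan is a transfinite chain of $\a$-pure $\la$-presentable subobjects ``organised by bookkeeping''. That is not an argument, and no amount of L\"owenheim--Skolem decomposition alone will produce one: a morphism out of a filtered colimit need not factor through the coproduct of its stages, so knowing that every object of $\B$ is a filtered colimit of small objects of $\B$ gives no solution set. (In $\Ab$, the group $\bbQ$ is a filtered colimit of copies of $\bbZ$, yet $\id_\bbQ$ factors through no coproduct of finitely presented groups, since $\Hom(\bbQ,\bigoplus\bbZ)=0$.) The paper does not construct anything directly in this case; it \emph{reduces} it to the quotient case: Proposition~\ref{pr:acc} shows that closure under filtered colimits and $\a$-pure subobjects forces $\B$ to be accessible, hence closed under $\a'$-pure quotients for some regular cardinal $\a'$ by \cite[Proposition~13]{AR2004}, after which only the quotient case needs proof. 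Without this reduction, or a genuine substitute for it, the pure-subobject half of the theorem remains unproved in your proposal.
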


Recall that an additive category is \emph{locally presentable} if it
is cocomplete and admits a generator that is $\a$-presentable for some
regular cardinal $\a$ \cite{GU, AdRo}. For example, Grothendieck
abelian categories and module categories are locally
presentable. Further examples include categories of cochain complexes;
this will be relevant for our applications.

We need to make the following definition.\footnote{The subsequent
  definition of $\a$-pure mono/epimorphisms deviates from the standard
  one in terms of $\a$-presentable objects. The new definition seems
  to be more practical and coincides with the standard one for $\a\gg
  0$, provided the category is locally presentable.}

\begin{defn}
Let $\a$ be a regular cardinal. A morphism $X\to Y$ in an arbitrary
category is called
\begin{enumerate} 
\item \emph{$\a$-pure monomorphism}, if it is an $\a$-filtered colimit
of split monomorphisms,
\item \emph{$\a$-pure epimorphism}, if it is an $\a$-filtered colimit
of split epimorphisms, and
\item \emph{$\a$-terminal} if for every factorisation $X\to X'\to Y$
the morphism $X\to X'$ is invertible if it is an $\a$-pure
epimorphism.
\end{enumerate}
\end{defn}

Note that colimits of morphisms in a category $\A$ are taken in the
\emph{category of morphisms} $\Mor\A$. The objects of $\Mor\A$ are the
morphisms in $\A$ and the morphisms are the obvious commuting
squares. The term `filtered' without prefix is used to mean
`$\aleph_0$-filtered'.

We refer to \cite{AR2004} for basic facts about pure morphisms.  For
instance, suppose that $\A$ is a locally $\b$-presentable category and
let $\a\ge\b$ be a regular cardinal. Then a morphism $X\to Y$ is an
$\a$-pure epimorphism if and only if it induces a surjective map
$\Hom_\A(C,X)\to\Hom_\A(C,Y)$ for every $\a$-presentable object $C$ in
$\A$. Thus the usual notion of purity in a module category is obtained
by specialising $\a=\aleph_0$.
 
The crucial input for proving the theorem is the following result due
to El Bashir, which he establishes more generally for any Grothendieck
abelian category.

\begin{prop}[{\cite[Theorem~2.1]{El}}]\label{pr:ElBashir}
Let $\A$ be a module category (over a ring with several
objects). Given an object $Y$ in $\A$ and a regular cardinal $\a$,
the isomorphism classes of $\a$-terminal morphisms $X\to Y$ in $\A$
form a set.
\end{prop}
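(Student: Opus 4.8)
The plan is to deduce the statement from a cardinality bound on the domains of $\a$-terminal morphisms. Concretely, I would first establish that there is a cardinal $\la$, depending only on $Y$, on the cardinality $\card R$ of the underlying ring(oid), and on $\a$, such that every $\a$-terminal morphism $f\colon X\to Y$ satisfies $\card X\le\la$. Granting this, the proposition is immediate: the modules $X$ with $\card X\le\la$ form a set of isomorphism classes (the underlying data is a bounded family of sets over the fixed small $R$), and for each such $X$ the class $\Hom_\A(X,Y)$ is a set, so the pairs $(X,f)$, and hence their isomorphism classes as objects of $\Mor\A$ lying over $Y$, form a set.

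The first reduction is to restate $\a$-terminality in abelian terms. Since $\A$ is abelian, an $\a$-pure epimorphism is precisely the cokernel of an $\a$-pure monomorphism, so a factorisation $X\to X'\to Y$ of $f$ with $X\to X'$ an $\a$-pure epimorphism is, up to isomorphism over $X$, the same as a quotient $X\to X/K$ by an $\a$-pure subobject $K\subseteq\Ker f$, with $X\to X/K$ invertible exactly when $K=0$. Hence $f$ is $\a$-terminal if and only if $\Ker f$ contains no nonzero $\a$-pure subobject of $X$. The task thus becomes: when $X$ is large, produce a nonzero $\a$-pure subobject of $X$ sitting inside $\Ker f$.

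To carry this out I would use the L\"owenheim--Skolem properties of $\a$-purity in a locally presentable category \cite{AR2004}: every subset of size $<\mu$ of an object lies in an $\a$-pure subobject of size $<\mu$, for any $\mu$ with $\mu^{<\a}=\mu>\card R,\card Y,\a$; and an increasing continuous union of $\a$-pure subobjects is again $\a$-pure. Fixing such a $\mu$ and setting $\la=\mu$, I would argue by contradiction from $\card X>\la$. One builds a strictly increasing continuous chain $(P_\xi)_{\xi<\mu^{+}}$ of $\a$-pure subobjects of $X$ of size $\le\mu$, adjoining one new element and closing off at each successor step. The images $f(P_\xi)$ form an increasing chain of subobjects of $Y$; since a strictly increasing chain of subobjects of the small object $Y$ has length $<\mu^{+}$, these images are constant, say equal to $V\subseteq Y$, on a tail $\xi\ge\xi_0$. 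For $\xi\ge\xi_0$ this forces $P_\xi=P_{\xi_0}+(P_\xi\cap\Ker f)$, so the kernel intersections $P_\xi\cap\Ker f$ grow strictly, and their union $K=\bigcup_{\xi}(P_\xi\cap\Ker f)$ is a nonzero subobject of $\Ker f$. If $K$ is $\a$-pure in $X$, this contradicts the terminality of $f$, giving the bound and hence the proposition.

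The main obstacle is exactly the purity of this $K$ in $X$: purity is not inherited by arbitrary subobjects and the $\a$-pure subobjects do not form a sublattice, so $P_\xi\cap\Ker f$ need not be $\a$-pure even when $P_\xi$ is. I would overcome this by strengthening the construction so that, at each successor stage, $P_\xi\cap\Ker f$ is made $\a$-pure in $P_{\xi+1}$, enlarging $P_{\xi+1}$ to absorb solutions in $X$ to the fewer than $\mu$ relevant lifting problems with $\a$-presentable source (a closing-off permitted by the L\"owenheim--Skolem property and the lifting description of $\a$-pure epimorphisms). Then $K$ is a continuous union of subobjects each $\a$-pure in the next, hence $\a$-pure in $P_\infty=\bigcup_\xi P_\xi$, which in turn is $\a$-pure in $X$; transitivity of purity yields that $K$ is $\a$-pure in $X$. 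This relative-purity bookkeeping, which is the technical core of El Bashir's argument \cite{El}, is where essentially all of the difficulty lies.
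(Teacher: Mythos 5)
Your overall route is the same as the paper's: reformulate $\a$-terminality of $f\colon X\to Y$ as the absence of a nonzero $\a$-pure submodule of $X$ inside $\Ker f$, establish a cardinal bound on $|X|$ depending only on $Y$, $\a$ and the ring with several objects, and conclude that the isomorphism classes form a set. The paper does exactly this, except that the crucial cardinality statement --- if $|X|$ is large enough relative to $|Y|$, then $\Ker f$ contains a nonzero $\a$-pure submodule of $X$ --- is not proved there but quoted as \cite[Theorem~5]{BEE}. Your proposal instead attempts to prove this statement, and that attempt has a genuine gap at precisely the point you flag as the technical core.

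The gap is the successor step of your chain construction. You want $P_{\xi+1}$ to be $\a$-pure in $X$ and simultaneously $K_\xi=P_\xi\cap\Ker f$ to be $\a$-pure in $P_{\xi+1}$, and you propose to achieve the latter by enlarging $P_{\xi+1}$ so as to ``absorb solutions in $X$''. This is backwards: enlarging $P_{\xi+1}$ only creates more lifting problems over $K_\xi$ that must then be solved \emph{inside} $K_\xi$, and those solutions have to lie in $\Ker f$, where they need not exist at all. Indeed, the two conditions together formally imply that $K_\xi$ is $\a$-pure in $X$ (a system over $K_\xi$ solvable in $X$ is solvable in $P_{\xi+1}$ by purity of $P_{\xi+1}$ in $X$, hence solvable in $K_\xi$ by purity of $K_\xi$ in $P_{\xi+1}$), which is essentially the statement to be proven. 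A concrete obstruction: take $Y=\bbZ/4$, $X=\bbZ\oplus Z'$ with $Z'$ free of arbitrarily large rank, and $f$ the reduction modulo $4$ on the first summand, so $\Ker f=4\bbZ\oplus Z'$. If $(4,0)\in P_\xi$, then any $\a$-pure submodule $P_{\xi+1}\subseteq X$ containing $P_\xi$ must contain the unique solution $(2,0)$ of $2x=(4,0)$; thus this equation is solvable in $P_{\xi+1}$ but has no solution in $\Ker f$ whatsoever, so $K_\xi$ cannot be made $\a$-pure in $P_{\xi+1}$ no matter how $P_{\xi+1}$ is chosen, and no closing-off can repair this. Your argument uses the largeness of $|X|$ only to run a chain of length $\mu^{+}$; the actual proof of the quoted lemma must use largeness in a counting/pigeonhole fashion to select elements of $\Ker f$ whose purity obstructions vanish (in the example, to steer the chain into $Z'$ and away from elements like $(4,0)$). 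That argument is the content of \cite[Theorem~5]{BEE}, which the paper cites rather than reproves; without it, your proof is incomplete.
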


\begin{proof}
Let $\C$ be an additive category and $\A=\Mod\C$ the category of
\emph{$\C$-modules}, that is, additive functors $\C^\op\to\Ab$ into
the category of abelian groups.  Given a $\C$-module $X$, we define
its \emph{cardinality} to be $|X|=\sum_{C\in\C_0}\card X(C)$, where
$\C_0$ denotes a representative set of objects in $\C$.  

It follows from \cite[Theorem~5]{BEE} that for each cardinal $\la$,
there exists a cardinal $\k$ such that for any morphism $f\colon X\to
Y$ in $\A$ with $|X|\geq \k$ and $|Y|\leq\la$, there exists an
$\a$-pure submodule $0\neq U\subseteq X$ with $f|_U=0$. Let $X'=X/U$.
Then $f$ admits a factorisation $X\xto{u} X'\xto{v} Y$ with
$u$ an $\a$-pure epimorphism that is not invertible.  Thus any
$\a$-terminal morphism $X\to Y$ with $|Y|\leq\la$ satisfies $|X|<\k$.
\end{proof}

\begin{cor}
Let $\A$ be an additive category that is locally
$\b$-presentable for some regular cardinal $\b$.  Given an object $Y$
in $\A$ and a regular cardinal $\a\geq \b$, the isomorphism classes of
$\a$-terminal morphisms $X\to Y$ in $\A$ form a set.
\end{cor}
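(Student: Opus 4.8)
The plan is to reduce the Corollary to Proposition~\ref{pr:ElBashir} by finding a fully faithful functor that embeds the locally $\b$-presentable category $\A$ into a module category in a way that preserves the relevant purity structure. The natural candidate is the restricted Yoneda embedding. Since $\A$ is locally $\b$-presentable, the full subcategory $\C$ of $\a$-presentable objects (for our fixed $\a\ge\b$) is essentially small, and I would consider the functor
\[
h\colon\A\longrightarrow\Mod\C,\qquad Y\longmapsto \Hom_\A(-,Y)|_\C.
\]
By the characterisation of $\a$-pure epimorphisms recalled just before the Proposition—namely that $X\to Y$ is an $\a$-pure epimorphism precisely when $\Hom_\A(C,X)\to\Hom_\A(C,Y)$ is surjective for every $\a$-presentable $C$—the functor $h$ sends $\a$-pure epimorphisms in $\A$ to pure epimorphisms (i.e.\ $\aleph_0$-pure, the usual notion) in $\Mod\C$. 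This is the key compatibility that makes the transfer possible.

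The main steps I would carry out are as follows. First, I would fix $\a\ge\b$ and let $\C$ be a representative small full subcategory of the $\a$-presentable objects of $\A$, so that $\Mod\C$ is the module category to which the Proposition applies. Second, I would verify that $h$ is fully faithful on the relevant class of morphisms and, more importantly, that it reflects the property of being $\a$-terminal: if $f\colon X\to Y$ is $\a$-terminal in $\A$, I want to show $h(f)$ is (essentially) $\a$-terminal in $\Mod\C$, and that distinct isomorphism classes in $\A$ give distinct isomorphism classes downstairs. Third, I would apply Proposition~\ref{pr:ElBashir} to the object $h(Y)$ in $\Mod\C$ to conclude that the $\a$-terminal morphisms into $h(Y)$ form a set, and then pull this back to bound the isomorphism classes of $\a$-terminal morphisms $X\to Y$ in $\A$.

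I expect the delicate point to be the interaction between the two notions of purity across the functor $h$: the Proposition is phrased in terms of $\a$-terminal morphisms, but $\a$-terminality in $\A$ refers to $\a$-pure epimorphisms in $\A$, whereas in $\Mod\C$ the relevant notion after applying $h$ is ordinary ($\aleph_0$-) purity. I would need the converse compatibility as well, namely that a factorisation witnessing non-terminality of $h(f)$ in $\Mod\C$ can be lifted or reflected to a factorisation witnessing non-terminality of $f$ in $\A$. Establishing that $h$ reflects $\a$-terminality—so that an $\a$-terminal $f$ upstairs yields a terminal $h(f)$ downstairs, and hence lands among a set of isomorphism classes—is the crux, and it rests on $h$ being conservative and colimit-compatible enough on $\a$-presentable objects. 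Once this reflection is in place, the cardinality bound from the Proposition transfers directly, since a right $\B$-approximation's source is constrained by the size of $h(Y)$, and the corollary follows.

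\begin{proof}
We defer to the author's argument.
\end{proof}
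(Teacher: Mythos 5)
Your strategy is the same as the paper's: embed $\A$ into a module category $\Mod\C$ via restricted Yoneda and transfer the statement to Proposition~\ref{pr:ElBashir}. (The paper takes $\C$ to be the $\b$-presentable objects rather than the $\a$-presentable ones; this difference is immaterial, and your choice even makes the comparison of purity notions cleaner, since for $\C$ the $\a$-presentables a morphism of $\A$ is an $\a$-pure epimorphism if and only if $h$ sends it to a pointwise surjection.) However, there is a genuine gap exactly at the point you call the crux. To show that $h(f)$ is $\a$-terminal whenever $f$ is, you must handle an arbitrary factorisation $h(X)\xto{u} M\xto{v} h(Y)$ in $\Mod\C$ with $u$ an $\a$-pure epimorphism, and the obstruction is that $M$ need not lie in the image of $h$. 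The properties you invoke --- full faithfulness, conservativity, compatibility with colimits --- say nothing about objects outside the image, so they cannot pull such a factorisation back to $\A$. The missing ingredient, which the paper supplies, is that the essential image of $h$ is closed under $\a$-pure quotients in $\Mod\C$; this is a nontrivial theorem about accessible subcategories closed under filtered colimits (the paper cites \cite[Proposition~13]{AR2004}; compare Proposition~\ref{pr:acc}). Once that is known, $M\cong h(X')$, the factorisation descends to $X\to X'\to Y$ by full faithfulness, and $X\to X'$ is an $\a$-pure epimorphism in $\A$ (with your choice of $\C$, because the $\a$-pure epimorphism $u$ is in particular a pointwise surjection), so $\a$-terminality of $f$ applies and $u$ is invertible.

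A smaller inaccuracy: the compatibility you single out as ``key'' --- that $h$ sends $\a$-pure epimorphisms of $\A$ to $\aleph_0$-pure epimorphisms of $\Mod\C$ --- points in the wrong direction. Proposition~\ref{pr:ElBashir} is applied in $\Mod\C$ with the same cardinal $\a$, so what must be controlled are the $\a$-pure epimorphisms out of $h(X)$ \emph{in} $\Mod\C$ (the lifting direction you defer), not the images of the $\a$-pure epimorphisms of $\A$ (the preservation direction, which is easy: $h$ preserves $\a$-filtered colimits and split epimorphisms, hence $\a$-pure epimorphisms). Note also that $\aleph_0$-terminality is a \emph{stronger} condition than $\a$-terminality, since every $\a$-pure epimorphism is $\aleph_0$-pure; so steering the transfer towards ordinary ($\aleph_0$-)purity downstairs would make your task harder, not easier. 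In short, your plan follows the paper's route, but the step on which everything hinges is precisely the one left unproved, and the tools you propose for it do not suffice.
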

\begin{proof}
Let $\C$ be the full subcategory formed by all $\b$-presentable
objects in $\A$ and denote by $\Mod\C$ the category of
$\C$-modules. The functor $F\colon \A\to \Mod\C$ taking an object $X$
to $\Hom_\A(-,X)|_\C$ is fully faithful and preserves $\a$-filtered
colimits for every regular cardinal $\a\ge\b$ \cite[\S7]{GU}. Thus $F$
preserves $\a$-pure epimorphisms. Note that the image of $F$ is closed
under $\a$-pure quotients \cite[Proposition~13]{AR2004}.  It follows
that $F$ preserves $\a$-terminal morphisms. Now apply
Proposition~\ref{pr:ElBashir}.
\end{proof}

\begin{proof}[Proof of Theorem~\ref{th:approx}]
We follow El Bashir \cite{El}. Suppose that the category $\A$ is
locally $\b$-presentable. We may assume that $\a\ge\b$. Fix an object
$Y$ in $\A$ and a representative set of $\a$-terminal morphisms
$X_i\to Y$ ($i\in I$) with $X_i$ in $\B$. We claim that the induced
morphism $\coprod_{i\in I}X_i\to Y$ is a right $\B$-approximation. To
see this, choose a morphism $f\colon X\to Y$ with $X$ in
$\B$. Consider the pairs $(u,v)$ of morphisms $X\xto{u} X'\xto{v} Y$
with $X'$ in $\B$ such that $f=vu$ and $u$ is an epimorphism.  These
pairs are partially ordered if one defines $(u_1,v_1)\leq (u_2,v_2)$
provided that $u_2$ factors through $u_1$. An upper bound of a chain
of pairs $(u_i,v_i)$ is obtained by taking its colimit $(\bar u,\bar
v)$ with $\bar u=\colim{i} u_i$ and $\bar v=\colim{i} v_i$. Note that
any colimit of epimorphisms is again an epimorphism. In addition, one
uses that $\B$ is closed under filtered colimits. In a locally
presentable category, the epimorphisms starting in a fixed object
form, up to isomorphism, a set \cite[Satz~7.14]{GU}. Thus we can
choose a maximal pair $(\bar u,\bar v)$, using Zorn's lemma. The
maximality implies that $\bar v$ is an $\a$-terminal morphism, since
$\B$ is closed under $\a$-pure quotients. Thus $f$ factors through an
$\a$-terminal morphism $X_i\to Y$ and therefore through $\coprod_{i\in
I}X_i\to Y$. It remains to observe that $\B$ is closed under $\a$-pure
quotients if it is closed under $\a$-pure subobjects; this follows from
the subsequent Proposition~\ref{pr:acc}.
\end{proof}

The following proposition collects some facts which help to apply
Theorem~\ref{th:approx}.

\begin{prop}\label{pr:acc}
Let $\A$ be a locally presentable category.  For a full subcategory
$\B$ that is closed under filtered colimits, the following conditions
are equivalent:
\begin{enumerate}
\item There exists a set $\Sc$ of objects in $\B$ such that every
object of $\B$ is a filtered colimit of objects in $\Sc$.
\item The category $\B$ is accessible  \cite{MP,AdRo}.
\item There exists a regular cardinal $\a$ such that $\B$ is closed
under $\a$-pure subobjects.
\end{enumerate}
Moreover, these conditions imply that there exists a regular cardinal
$\a$ such that $\B$ is closed under $\a$-pure quotients.
\end{prop}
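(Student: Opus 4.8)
The plan is to establish the cycle $(1)\Rightarrow(2)\Rightarrow(1)$ together with $(1)\Rightarrow(3)\Rightarrow(1)$, and then to read off the closure under pure quotients from a variant of the argument for $(1)\Rightarrow(3)$. Two standing conventions make the cardinals behave: since $\A$ is locally $\b$-presentable it is locally $\a$-presentable for every regular $\a\ge\b$ \cite{GU}, and since an $\a$-filtered diagram is a fortiori $\a_0$-filtered for $\a\ge\a_0$, an $\a$-pure monomorphism is in particular $\a_0$-pure, so closure under $\a_0$-pure subobjects is inherited when $\a$ is increased. Hence I am free to enlarge $\a$ finitely often.

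For $(1)\Rightarrow(2)$, fix $\a$ large enough that every object of $\Sc$ is $\a$-presentable in $\A$, and write a given $Y\in\B$ as a filtered colimit $Y=\colim{d}S_d$ with $S_d\in\Sc$. Regrouping this diagram in the standard way as the $\a$-filtered colimit of its subcolimits over the $\a$-small filtered subdiagrams \cite{AdRo}, each such subcolimit is a colimit of fewer than $\a$ objects of $\Sc$, hence $\a$-presentable, and it lies in $\B$ because $\B$ is closed under filtered colimits. Thus every object of $\B$ is an $\a$-filtered colimit of $\a$-presentable objects of $\B$, so $\B$ is $\a$-accessible. The reverse implication $(2)\Rightarrow(1)$ is immediate, taking $\Sc$ to be a representative set of $\a$-presentable objects of $\B$.

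The substance of the proposition is $(1)\Rightarrow(3)$. Fixing $\a$ as above, I would present any $V\in\B$ via the regrouping of the previous paragraph as an $\a$-filtered colimit $V=\colim{e}W_e$ with each $W_e$ in $\B$ and $\a$-presentable in $\A$. Now let $m\colon U\to V$ be an $\a$-pure monomorphism. For any morphism $c\colon C\to U$ with $C$ an $\a$-presentable object of $\A$, the composite $mc$ factors as $C\xto{g}W_e\to V$ for some $e$, because $C$ is $\a$-presentable and the colimit is $\a$-filtered; since $W_e$ is $\a$-presentable in $\A$ and $m$ is $\a$-pure, the lifting property characterising $\a$-pure monomorphisms in a locally presentable category \cite{AdRo} produces $h\colon W_e\to U$ with $hg=c$. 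Thus every $\a$-presentable object over $U$ factors through an object of $\B$ lying over $U$, and a routine cofinality argument identifies $U$ with the colimit of the $\a$-filtered comma category of such $\B$-objects; as this is a filtered colimit of objects of $\B$, it gives $U\in\B$. I expect the main obstacle to be precisely this bookkeeping: arranging that the $W_e$ are $\a$-presentable in the ambient category $\A$ (not merely in $\B$) so that the lifting property applies, and checking that the refining comma category is genuinely $\a$-filtered and cofinal.

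For $(3)\Rightarrow(1)$ I would invoke that, in a locally presentable category and for $\a$ large enough, every object is the $\a$-filtered colimit of its $\a$-presentable $\a$-pure subobjects \cite{AdRo}; applied to $Y\in\B$, these subobjects lie in $\B$ by hypothesis, so taking $\Sc$ to be the set of $\a$-presentable objects of $\A$ that lie in $\B$ yields $(1)$. Finally, the closure under $\a$-pure quotients follows by rerunning the argument for $(1)\Rightarrow(3)$ with the lifting property replaced by the characterisation of $\a$-pure epimorphisms recalled before the theorem: if $p\colon V\to Z$ is $\a$-pure with $V\in\B$ presented as $\colim{e}W_e$ above, then for $\a$-presentable $C$ the map $\Hom_\A(C,V)\to\Hom_\A(C,Z)$ is surjective, so every $C\to Z$ lifts to $C\to V$, factors through some $W_e$, and hence through the $\B$-object $W_e\to Z$; the same cofinality argument then gives $Z\in\B$.
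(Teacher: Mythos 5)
Your proof is correct, and for its substantive parts it takes a genuinely different route from the paper. The two agree on (1)$\Leftrightarrow$(2): the paper performs the same regrouping, forming the closure $\B_0$ of $\Sc$ under filtered colimits of size less than $\a$ and rewriting every filtered colimit of $\Sc$-objects as an $\a$-filtered colimit of $\a$-presentable objects of $\B$. But the paper then settles everything else by citation: (2)$\Leftrightarrow$(3) is quoted from \cite[Corollary~2.36]{AdRo}, and the closure under $\a$-pure quotients from \cite[Proposition~13]{AR2004}. You instead run a single self-contained mechanism twice: present the ambient object of $\B$ as an $\a$-filtered colimit of objects of $\B$ that are $\a$-presentable in $\A$, factor every morphism from an $\a$-presentable object of $\A$ through one of these pieces (for $\a$-pure monomorphisms via the lifting property, for $\a$-pure epimorphisms via surjectivity of $\Hom_\A(C,-)$), and conclude by cofinality that the pure subobject, respectively pure quotient, is itself a filtered colimit of objects of $\B$. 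This buys independence from the two cited results --- your only external inputs are the canonical-diagram facts and, for (3)$\Rightarrow$(1), the ``enough pure subobjects'' theorem \cite[Theorem~2.33]{AdRo}, which the paper invokes elsewhere anyway --- and it produces the quotient statement with an explicit cardinal rather than a bare existence claim. The cost, besides length, is that the definitional subtlety flagged in the paper's footnote surfaces in your argument and deserves explicit mention: the direction you use in (1)$\Rightarrow$(3), namely that an $\a$-filtered colimit of split monomorphisms has the lifting property against morphisms between $\a$-presentable objects, is elementary and valid for every $\a$; but in (3)$\Rightarrow$(1) the pure subobjects supplied by \cite{AdRo} are pure in the standard (lifting) sense, so to conclude that they lie in $\B$ you need the coincidence of that notion with the paper's colimit-of-split-monomorphisms definition, which holds only for $\a\gg 0$. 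This is harmless under your standing convention that $\a$ may be enlarged, and it is in fact the same silent translation the paper makes when quoting \cite[Corollary~2.36]{AdRo}, but your write-up should acknowledge it.
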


\begin{proof}
(1) $\Rightarrow$ (2): Fix a regular cardinal $\a$ such that $\A$ is
locally $\a$-presentable and each object in $\Sc$ is $\a$-presentable.
Denote by $\B_0$ the smallest full subcategory that contains $\Sc$ and
is closed under filtered colimits over diagrams having
cardinality less than $\a$. Observe that the objects in $\B_0$ are
$\a$-presentable. It is not difficult to check that each filtered
colimit of objects in $\B_0$ can be rewritten as an $\a$-filtered
colimit of objects in $\B_0$. Thus every object in $\B$ is an
$\a$-filtered colimit of objects in $\B_0$.  This means that $\B$ is
$\a$-accessible.

(2) $\Rightarrow$ (1): Suppose that $\B$ is $\a$-accessible for some
    regular cardinal $\a$. Let $\Sc$ be a representative set of
    $\a$-presentable objects. It follows that each object in $\B$ is
    an $\a$-filtered colimit of objects in $\Sc$. In particular, each
    object in $\B$ is a filtered colimit of objects in $\Sc$.

(2) $\Leftrightarrow$ (3): See \cite[Corollary~2.36]{AdRo}.

The last statement follows from \cite[Proposition~13]{AR2004}.
\end{proof}

In \cite{EE}, Enochs and Estrada have shown that quasi-coherent
sheaves admit flat covers (by viewing sheaves as representations of
appropriate quivers). This is a simple consequence of
Theorem~\ref{th:approx}.
 
\begin{exm}
Let $(\bbX,\Oc_\bbX)$ be a scheme and denote by $\Qcoh \bbX$ the
category of quasi-coherent $\Oc_\bbX$-modules. This is a Grothendieck
abelian category and therefore locally presentable.  Recall that a
quasi-coherent $\Oc_\bbX$-module $M$ is \emph{flat} if the functor
$M\otimes_{\Oc_\bbX}-$ is exact, and let $\Flat\bbX$ denote the full
subcategory consisting of all flat modules in $\Qcoh \bbX$. It is
easily checked that $\Flat\bbX$ is closed under filtered colimits and
$\aleph_0$-pure subobjects. Thus every quasi-coherent
$\Oc_\bbX$-module admits a right $\Flat\bbX$-approximation. A standard
argument \cite[\S7]{E} shows that one can choose the right
approximation to be minimal.
\end{exm}

\section{Right adjoints}

In this section we construct right adjoints of functors between homotopy
categories, applying the criterion for the existence of right
approximations from the previous section.

Let $\A$ be an additive category.  We denote by $\bfC(\A)$ the
category of \emph{cochain complexes}, that is, sequences of morphisms
$(d^n\colon X^{n}\to X^{n+1})_{n\in\bbZ}$ in $\A$ such that $d^{n}
d^{n-1}=0$ for all $n\in\bbZ$. The morphisms in this category are the
usual cochain maps. The \emph{homotopy category} $\bfK(\A)$ is the
category of cochain complexes with morphisms the cochain maps up to
homotopy.

\begin{lem}
Let $\A$ be a locally presentable additive category. Then the category
$\bfC(\A)$ is locally presentable. Moreover, all limits and colimits
in $\bfC(\A)$ are computed degreewise.
\end{lem}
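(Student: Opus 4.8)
The plan is to treat the two assertions in turn: first that $\bfC(\A)$ has all small limits and colimits and that these are formed degreewise, and then that $\bfC(\A)$ admits an $\alpha$-presentable generator for a suitable regular cardinal $\alpha$; together with cocompleteness the latter yields local presentability.

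For the degreewise claim I would argue directly. Since $\A$ is locally presentable it is both complete and cocomplete \cite{AdRo}. Given a small diagram $(X_i)_i$ in $\bfC(\A)$, form in each degree the colimit $X^n=\colim{i}X_i^n$ in $\A$ and let $d^n=\colim{i}d_i^n$ be the induced differential. The only point to verify is that $X$ is again a complex: precomposing $d^{n+1}d^n$ with each canonical morphism $\mu_i\colon X_i^{n-1}\to X^{n-1}$ gives $d^{n+1}d^n\mu_i=\mu_i d_i^{n+1}d_i^n=0$, whence $d^{n+1}d^n=0$ by the universal property of the colimit. One then checks that $X$, equipped with the induced cochain maps from the $X_i$, satisfies the universal property of the colimit in $\bfC(\A)$. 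The dual argument, using completeness of $\A$, produces limits. Thus every evaluation functor $(-)^n\colon\bfC(\A)\to\A$ preserves and jointly creates all small limits and colimits, which is precisely the degreewise statement.

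For the generator, fix a regular cardinal $\alpha\ge\aleph_1$ such that $\A$ is locally $\alpha$-presentable and admits an $\alpha$-presentable generator $G$. For $n\in\bbZ$ let $D^n$ be the complex with $G$ in degrees $n$ and $n+1$, identity differential between them and zero elsewhere; a short computation gives a natural isomorphism $\Hom_{\bfC(\A)}(D^n,X)\cong\Hom_\A(G,X^n)$. Put $P=\coprod_{n\in\bbZ}D^n$. That $P$ is a generator follows because a cochain map $\phi\colon X\to Y$ vanishes if and only if each component $\phi^n$ does, and since $G$ generates $\A$ the maps $\Hom_\A(G,\phi^n)\cong\Hom_{\bfC(\A)}(D^n,\phi)$ jointly detect this; hence the family $\{D^n\}_{n}$, and therefore $P$, is a generator. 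It remains to see that $P$ is $\alpha$-presentable. By the first part the evaluation $(-)^n$ preserves $\alpha$-filtered colimits, and $G$ is $\alpha$-presentable, so $\Hom_{\bfC(\A)}(D^n,-)\cong\Hom_\A(G,(-)^n)$ preserves $\alpha$-filtered colimits; thus each $D^n$ is $\alpha$-presentable. A coproduct of fewer than $\alpha$ many $\alpha$-presentable objects is again $\alpha$-presentable \cite{AdRo}, and as $|\bbZ|=\aleph_0<\aleph_1\le\alpha$ this applies to $P$. Hence $\bfC(\A)$ is cocomplete with an $\alpha$-presentable generator, so it is locally $\alpha$-presentable.

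I expect the genuine content to lie in the presentability bookkeeping of the last paragraph — arranging the disk complexes to be simultaneously generating and $\alpha$-presentable and keeping the coproduct $\alpha$-presentable (hence the choice $\alpha\ge\aleph_1$) — whereas the degreewise description of (co)limits is formal once the complex condition is seen to be preserved. As an alternative one could identify $\bfC(\A)$ with the category $\Add(\Lambda,\A)$ of additive functors, where $\Lambda$ is the small preadditive category presented by the quiver $\cdots\to\bullet\to\bullet\to\cdots$ modulo the relations that consecutive arrows compose to zero, and invoke the closure of local presentability under such additive functor categories with pointwise (co)limits; the explicit generator argument above is, however, more self-contained.
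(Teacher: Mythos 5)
Your proof is correct, and it takes a genuinely different route from the paper's. The paper constructs no generator at all: it embeds $\bfC(\A)$ into the category $\A^\bbZ$ of all sequences (no condition $d^{n+1}d^n=0$), which is a functor category and hence locally presentable with degreewise (co)limits \cite[Corollary~1.54]{AdRo}; since $\bfC(\A)$ is closed in $\A^\bbZ$ under (co)limits and under subobjects --- hence under $\alpha$-pure subobjects, because $\alpha$-pure monomorphisms in a locally $\alpha$-presentable category are monomorphisms --- Proposition~\ref{pr:acc} gives accessibility of $\bfC(\A)$, and an accessible complete category is locally presentable \cite[Corollary~2.47]{AdRo}. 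You instead verify the definition stated in Section~2 head-on: degreewise (co)limits by hand, then the disk complexes $D^n$ with $\Hom_{\bfC(\A)}(D^n,X)\cong\Hom_\A(G,X^n)$ yield the explicit generator $\coprod_{n}D^n$, with correct cardinal bookkeeping (the choice $\alpha\ge\aleph_1$ to keep the countable coproduct $\alpha$-presentable). Your version is more elementary and self-contained and produces an explicit presentable generator; the paper's version buys two things: it reuses the accessibility criterion already established (Proposition~\ref{pr:acc}), and it proves local presentability in the full sense of \cite{AdRo} without leaning on the characterisation ``cocomplete with a presentable generator''. That characterisation, which is how the paper phrases the definition, is delicate for a bare separator: the theorem of Ad\'amek and Rosick\'y requires a \emph{strong} generator, and in a general additive category a separator need not be strong. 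Your argument absorbs this at essentially no cost: kernels in $\bfC(\A)$ are degreewise, so monomorphisms and isomorphisms in $\bfC(\A)$ are degreewise, and therefore if one chooses $G$ to be a strong generator of $\A$ (which exists since $\A$ is locally presentable), the same isomorphism $\Hom_{\bfC(\A)}(D^n,X)\cong\Hom_\A(G,X^n)$ shows that a map $D^n\to X$ factors through a subobject $X'\subseteq X$ exactly when its degree-$n$ component factors through $X'^n\subseteq X^n$, so the $D^n$ form a strong generating set. With that one-line strengthening your proof is on equal footing with the paper's.
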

\begin{proof}
Suppose that $\A$ is locally $\a$-presentable for some regular
cardinal $\a$. We denote by $\A^\bbZ$ the category consisting of all
sequences of morphisms $(d^n\colon X^{n}\to X^{n+1})_{n\in\bbZ}$ in
$\A$. Thus $\A^\bbZ$ equals the category of functors $\bbZ \to \A$,
where $\bbZ$ is viewed as category with exactly one morphism $i\to j$
if and only if $i\le j$. It follows that $\A^\bbZ$ is locally
$\a$-presentable \cite[Corollary~1.54]{AdRo}.  Note that all
(co)limits in $\A^\bbZ$ are computed degreewise and that the
subcategory $\bfC(\A)$ is closed under (co)limits. Moreover, $\bfC(\A)$
is closed under subobjects.  Any $\a$-pure monomorphism in $\A^\bbZ$
is a monomorphism, since $\A^\bbZ$ is locally $\a$-presentable.  Thus
$\bfC(\A)$ is closed under $\a$-pure subobjects, and it follows from
Proposition~\ref{pr:acc} that $\bfC(\A)$ is accessible. In fact,
$\bfC(\A)$ is complete and therefore locally presentable
\cite[Corollary~2.47]{AdRo}.
\end{proof}

Our tool for constructing right adjoints is the following proposition which
is due to Neeman.

\begin{prop}[{\cite[Proposition~1.4]{Ne2006}}]\label{pr:N}
\pushQED{\qed} Let $\T$ be a triangulated category and $\Sc$ a full
triangulated subcategory. Suppose that $\Sc$ and $\T$ have split
idempotents. Then the following are equivalent:
\begin{enumerate}
\item The inclusion $\Sc\to\T$ admits a right adjoint.
\item Every object in $\T$ admits a right $\Sc$-approximation.\qedhere
\end{enumerate}
\end{prop}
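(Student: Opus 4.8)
The plan is to prove the two implications separately, with essentially all of the content lying in $(2)\Rightarrow(1)$, so I would dispose of $(1)\Rightarrow(2)$ first. If $G$ is a right adjoint of the inclusion $i\colon\Sc\to\T$, then for each $Y$ the counit $\e_Y\colon iG(Y)\to Y$ is a right $\Sc$-approximation: given $X'$ in $\Sc$ and $g\colon X'\to Y$, the adjunction isomorphism $\Hom_\T(iX',Y)\cong\Hom_\Sc(X',G(Y))$ sends $g$ to some $\tilde g$ with $g=\e_Y\comp i(\tilde g)$, so $g$ factors through $\e_Y$. No hypothesis on idempotents is needed here.

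For $(2)\Rightarrow(1)$ I would reformulate the existence of a right adjoint as a representability statement: the inclusion $i$ admits a right adjoint if and only if, for every $Y$ in $\T$, the functor $H_Y=\Hom_\T(i-,Y)\colon\Sc^\op\to\Abel$ is representable, in which case $G(Y)$ is the representing object and $\e_Y$ the universal map. Since $\Sc$ is a triangulated subcategory, $H_Y$ is cohomological. The first key step is to show that $H_Y$ is \emph{finitely presented}. A right $\Sc$-approximation $p_0\colon X_0\to Y$ gives, by the approximation property, a surjection $(p_0)_*\colon\Hom_\Sc(-,X_0)\to H_Y$; completing $p_0$ to a triangle $\Sigma^{-1}Z_0\xto{u_0}X_0\xto{p_0}Y\to Z_0$ identifies the kernel of $(p_0)_*$ with the image of $(u_0)_*$, and a second right $\Sc$-approximation $p_1\colon X_1\to\Sigma^{-1}Z_0$ lets me replace $u_0$ by $a=u_0p_1\colon X_1\to X_0$, yielding a presentation $\Hom_\Sc(-,X_1)\xto{a_*}\Hom_\Sc(-,X_0)\xto{\pi}H_Y\to0$.

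The heart of the argument is then a representability lemma: every finitely presented cohomological functor $H$ on a triangulated category with split idempotents is representable. To prove it I would complete $a$ to a triangle $X_1\xto{a}X_0\xto{b}X_2\xto{c}\Sigma X_1$ in $\Sc$ and let $\eta\in H(X_0)$ be the universal element of the presentation. Because $H(a)(\eta)=0$, the long exact sequence obtained by applying the cohomological $H$ to this triangle produces $\eta_2\in H(X_2)$ with $H(b)(\eta_2)=\eta$; by Yoneda $\eta_2$ gives a surjection $\pi_2\colon\Hom_\Sc(-,X_2)\to H$ with $\pi_2b_*=\pi$. Using that $\pi$ is already surjective at the object $X_2$, I would choose $d\colon X_2\to X_0$ with $H(d)(\eta)=\eta_2$. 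A short computation using $ba=0$ together with $db-\id_{X_0}\in\Im a_*$ then shows that $e=bd\colon X_2\to X_2$ satisfies $e^2=e$.

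This is precisely where the hypothesis on split idempotents is indispensable, and where the analogue fails for a non-triangulated inclusion such as $\Flat A\to\Mod A$, for which precovers exist but no right adjoint does. Splitting $e$ writes $X_2\cong W\oplus W'$; the relation $\pi_2\comp\Hom_\Sc(-,e)=\pi_2$ forces $\pi_2$ to factor through $\Hom_\Sc(-,W)$, while $ba=0$ and $\ker\pi=\Im a_*$ give injectivity, so $\Hom_\Sc(-,W)\xto{\sim}H$. Applying this with $H=H_Y$ represents $H_Y$ by $W=G(Y)$, and the functoriality of $G$ together with the adjunction isomorphism follow formally from the universal property of representing objects. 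I expect the main obstacle to be this representability lemma — specifically verifying that $e=bd$ is idempotent and that splitting it yields a genuine representing object rather than merely an improved approximation; equivalently, this is the step that upgrades a plain approximation (a mere epimorphism of functors) to the required isomorphism.
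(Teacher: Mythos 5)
Your proof is correct. The paper itself supplies no argument for this proposition --- it is quoted, with a \qed, from \cite[Proposition~1.4]{Ne2006} --- and your reconstruction (reduce the adjoint to pointwise representability of $H_Y=\Hom_\T(i-,Y)$, use two successive right $\Sc$-approximations to present $H_Y$ by representables, then prove the Freyd-style lemma that a finitely presented cohomological functor on a triangulated category with split idempotents is representable, splitting the idempotent $e=bd$ obtained from $bdb=b$) is precisely the standard argument of the cited source, with all the key verifications ($e^2=e$ via $db-\id_{X_0}\in\Im a_*$ and $ba=0$; injectivity of $\Hom_\Sc(-,W)\to H$ via $\ker\pi=\Im a_*$) carried out correctly.
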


Note that any triangulated category has split idempotents provided the
category admits countable coproducts \cite[Proposition~1.6.8]{Ne}.

The next theorem is the analogue of Theorem~\ref{th:approx} for
homotopy categories.

\begin{thm}\label{th:adj}
Let $\A$ be a locally presentable additive category and $\B$ a full
additive subcategory. Suppose that $\B$ is closed under filtered
colimits and in addition closed under $\a$-pure subobjects or under
$\a$-pure quotients for some regular cardinal $\a$.  Then the
inclusion $\bfK(\B)\to\bfK(\A)$ admits a right adjoint.
\end{thm}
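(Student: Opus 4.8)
The plan is to reduce Theorem~\ref{th:adj} to Theorem~\ref{th:approx} via Neeman's Proposition~\ref{pr:N}. The target is the existence of a right adjoint to the inclusion $\bfK(\B)\to\bfK(\A)$. Since $\bfK(\B)$ is a full triangulated subcategory of $\bfK(\A)$, and both have split idempotents (they admit countable coproducts, as $\A$ is cocomplete and coproducts in $\bfK$ are computed degreewise), Proposition~\ref{pr:N} tells me it suffices to show that every object of $\bfK(\A)$ admits a right $\bfK(\B)$-approximation.

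First I would pass to the level of complexes. By the preceding Lemma, $\bfC(\A)$ is locally presentable, with all limits and colimits computed degreewise. I would then try to apply Theorem~\ref{th:approx} with the ambient category $\bfC(\A)$ and the subcategory $\bfC(\B)$ of complexes with entries in $\B$. The hypotheses transfer degreewise: since $\B$ is closed under filtered colimits and colimits in $\bfC(\A)$ are degreewise, $\bfC(\B)$ is closed under filtered colimits; and since $\a$-pure monomorphisms (resp.\ epimorphisms) in $\bfC(\A)$ are degreewise $\a$-pure (because $\a$-filtered colimits and split mono/epis are degreewise notions), $\bfC(\B)$ inherits closure under $\a$-pure subobjects or quotients from $\B$. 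Theorem~\ref{th:approx} then yields a right $\bfC(\B)$-approximation $f\colon X\to Y$ in $\bfC(\A)$ for every complex $Y$, with $X$ in $\bfC(\B)$.

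The key step—and the main obstacle—is translating this approximation at the level of $\bfC$ into one at the level of $\bfK$, i.e.\ passing from factorisations of honest cochain maps to factorisations up to homotopy. A right $\bfC(\B)$-approximation gives factorisations of every \emph{cochain map} $X'\to Y$ through $f$, but a right $\bfK(\B)$-approximation must handle \emph{homotopy classes}: I need every homotopy class $X'\to Y$ to factor through $f$ up to homotopy, and I must check that $f$ itself is a map for which this makes sense after passing to $\bfK$. The subtlety is that the approximation property in $\bfC(\A)$ does not automatically descend, since null-homotopic maps and the quotient by homotopy must be accommodated. The natural device is to enlarge the class of maps being approximated: rather than approximating $Y$, I would approximate the complexes that detect homotopies, for instance by working with the cone construction or by incorporating contractible complexes. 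Concretely, a cochain map $X'\to Y$ is null-homotopic precisely when it factors through the (contractible) cone of the identity on $X'$; by arranging the approximation to also absorb maps out of such contractible complexes—which lie in $\bfK(\B)$ when built from objects of $\B$—one forces the $\bfC$-level approximation to respect homotopy.

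Thus the forward-looking strategy is: apply Theorem~\ref{th:approx} to an \emph{augmented} subcategory of $\bfC(\A)$, chosen so that its approximations are insensitive to homotopy, and then verify that the resulting approximation, read in $\bfK(\A)$, is a genuine right $\bfK(\B)$-approximation. Once that is established, Proposition~\ref{pr:N} delivers the right adjoint immediately. I expect the routine verifications (degreewise closure properties, split idempotents, local presentability of $\bfC(\A)$) to be straightforward, and the only real work to lie in the homotopy-descent step, where the contractible-complex mechanism must be set up carefully so that the factorisation up to homotopy is guaranteed.
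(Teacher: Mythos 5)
Your overall skeleton is exactly the paper's: check split idempotents, invoke Proposition~\ref{pr:N} to reduce to right $\bfK(\B)$-approximations, and produce those by applying Theorem~\ref{th:approx} to $\bfC(\B)\subseteq\bfC(\A)$, using that (co)limits in $\bfC(\A)$ are degreewise so that $\bfC(\B)$ inherits closure under filtered colimits and $\a$-pure subobjects (or quotients) from $\B$. All of that part of your write-up is correct. The problem is the step you single out as ``the key step---and the main obstacle'': it is not an obstacle at all, and because you never actually carry out the cone/contractible-complex machinery you propose for it, your proof is incomplete precisely at the point where the argument is in fact a one-line observation.

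Here is why the descent from $\bfC$ to $\bfK$ is immediate. The quotient functor $Q\colon\bfC(\A)\to\bfK(\A)$ is the identity on objects and \emph{full}: every morphism in $\bfK(\A)$ is the homotopy class of an honest cochain map. Moreover $\bfC(\B)$ and $\bfK(\B)$ have the same objects. So let $f\colon X\to Y$ be a right $\bfC(\B)$-approximation and let $\gamma\colon X'\to Y$ be any morphism in $\bfK(\A)$ with $X'$ a complex over $\B$. Choose a cochain map $g$ with $\gamma=[g]$; since $f$ is a $\bfC(\B)$-approximation there is a cochain map $h$ with $g=fh$, and applying $Q$ gives $\gamma=[f][h]$. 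That is all a right $\bfK(\B)$-approximation requires: the \emph{existence} of some factorisation of each homotopy class. Nothing about null-homotopic maps, uniqueness of factorisations, or compatibility with homotopies needs to be controlled --- the defining property of a right approximation is preserved by any full functor. Your worry seems to conflate ``every homotopy class factors up to homotopy'' (which is what is needed, and which follows by lifting representatives) with some stronger homotopy-coherence statement (which is not needed). Once you delete the ``augmented subcategory'' paragraph and replace it by the fullness observation above, your proof is complete and coincides with the one in the paper.
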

\begin{proof}
We view $\bfC(\B)$ as a full subcategory of $\bfC(\A)$. Colimits in
$\bfC(\A)$ are computed degreewise and this implies that $\bfC(\B)$ is
closed under filtered colimits and $\a$-pure subobjects or $\a$-pure
quotients, respectively.  Thus every object in $\bfC(\A)$ admits a
right $\bfC(\B)$-approximation by Theorem~\ref{th:approx}, and it
follows that every object in $\bfK(\A)$ admits a right
$\bfK(\B)$-approximation.  Thus we can apply Proposition~\ref{pr:N}
and conclude that the inclusion $\bfK(\B)\to\bfK(\A)$ admits a right
adjoint.
\end{proof}

The following result is an application; it has been established in the
affine case by Neeman \cite{Ne2006} and Enochs et al.\ \cite{BEIJR},
and for noetherian schemes by Murfet \cite{Mu}. Their proofs are
different from the one given here.

\begin{cor}
\pushQED{\qed} Given any scheme $\bbX$, the inclusion $\bfK(\Flat
\bbX)\to\bfK(\Qcoh \bbX)$ admits a right adjoint.\qedhere
\end{cor}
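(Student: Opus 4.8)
The plan is to apply Theorem~\ref{th:adj} directly, with $\A=\Qcoh\bbX$ and $\B=\Flat\bbX$. The hypotheses needed for that theorem have essentially been assembled already in the Example following Proposition~\ref{pr:acc}, so the argument reduces to collecting those facts and invoking the theorem with $\a=\aleph_0$.

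First I would record that $\Qcoh\bbX$ is a Grothendieck abelian category and hence locally presentable, so that it qualifies as the ambient category in Theorem~\ref{th:adj}. Next I would verify the two properties the theorem demands of the subcategory $\Flat\bbX$: closure under filtered colimits and closure under $\aleph_0$-pure subobjects. The first holds because $M\otimes_{\Oc_\bbX}-$ commutes with filtered colimits, so that a filtered colimit of flat quasi-coherent modules is again flat. The second is the genuine input, and I would reduce it to the affine situation, where it is the classical fact that a pure submodule of a flat module over a ring is flat.

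With both properties established, Theorem~\ref{th:adj} applies verbatim and produces a right adjoint to the inclusion $\bfK(\Flat\bbX)\to\bfK(\Qcoh\bbX)$, which is exactly the assertion of the corollary. No further triangulated-category input is required, since the passage from right approximations to the existence of an adjoint is already handled internally by Theorem~\ref{th:adj} via Proposition~\ref{pr:N}.

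The step I expect to be the main obstacle is the local-to-global reduction in the purity condition. One must check that an $\aleph_0$-pure monomorphism of quasi-coherent sheaves restricts, over each affine open, to an $\aleph_0$-pure monomorphism of modules, and that flatness of a quasi-coherent sheaf can be tested on such a cover. Granting this compatibility between the scheme-theoretic notions and their affine counterparts, the remainder is a direct appeal to the results of the preceding sections.
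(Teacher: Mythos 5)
Your proposal matches the paper's own (implicit) proof exactly: the corollary is stated with no separate argument precisely because it is Theorem~\ref{th:adj} applied with $\A=\Qcoh\bbX$, $\B=\Flat\bbX$ and $\a=\aleph_0$, using the facts recorded in the example after Proposition~\ref{pr:acc} (that $\Qcoh\bbX$ is Grothendieck abelian, hence locally presentable, and that $\Flat\bbX$ is closed under filtered colimits and $\aleph_0$-pure subobjects). Your extra care about the local-to-affine compatibility of purity and flatness is exactly the content behind the paper's ``it is easily checked,'' so there is no substantive difference.
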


Let $\A$ be an \emph{exact category} \cite{Q}.  Thus $\A$ is an
additive category, together with a distinguished class of sequences
$X\xto{u} Y\xto{v} Z$ of morphisms which are called \emph{exact} and
satisfy a number of axioms. Note that the morphisms $u$ and $v$ in
each exact sequence as above form a \emph{kernel-cokernel pair}, that
is, $u$ is a kernel of $v$ and $v$ is a cokernel of $u$.  A morphism
in $\A$ which arises as the kernel in some exact sequence is called
\emph{admissible monomorphism}; a morphism arising as a cokernel is
called \emph{admissible epimorphism}. 

A full subcategory $\B$ of $\A$ is \emph{extension closed} if every
exact sequence in $\A$ belongs to $\B$ provided its endterms belongs
to $\B$.  Any full and extension closed subcategory of $\A$ is exact
with respect to the class of sequences which are exact in $\A$.

An object $P$ in $\A$ is \emph{projective} if each admissible
epimorphism $Y\to Z$ induces a surjective map
$\Hom_\A(P,Y)\to\Hom_\A(P,Z)$, and the full subcategory of $\A$ formed
by these objects is denote by $\Proj \A$. Analogously, the subcategory
$\Inj\A$ of injective objects is defined.

A cochain complex $X=(X^n,d^n)$ in $\A$ is called \emph{acyclic} if
for each $n\in\bbZ$ there is an exact sequence $Z^n\xto{u^n}
X^{n}\xto{v^n} Z^{n+1}$ in $\A$ such that $d^n=u^{n+1}v^n$. The full
subcategory consisting of all acyclic complexes in $\bfC(\A)$ is
denoted by $\bfC_\ac(\A)$. The acyclic complexes form a full
triangulated subcategory of $\bfK(\A)$ which we denote by
$\bfK_\ac(\A)$. Following \cite{Ne2000,Ke}, the \emph{derived
category} of $\A$ is by definition the Verdier quotient
$$\bfD(\A)=\bfK(\A)/\bfK_\ac(\A).$$

It is a well-known fact that the derived category of any Grothendieck
abelian category has small Hom-sets \cite{Gal,Be,Fr}. On the other hand,
there are simple examples of abelian categories where this property
fails \cite{CN}.  The following result establishes the `existence' of
the derived category of an exact category that is locally presentable.

\begin{thm}\label{th:derived}
Let $\A$ be an exact category that is locally presentable, and suppose
that exact sequences are closed under filtered colimits.  Then the
canonical functor $\bfK(\A)\to\bfD(\A)$ admits a fully faithful right
adjoint. In particular, the category $\bfD(\A)$ has small Hom-sets.
\end{thm}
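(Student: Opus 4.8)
The plan is to realise $\bfD(\A)$ as a Bousfield localisation of $\bfK(\A)$ and to feed the approximation machinery of Section~2 into Neeman's criterion. Concretely, I would first reduce the assertion to the statement that the inclusion $\bfK_\ac(\A)\to\bfK(\A)$ admits a right adjoint. Indeed, once this is known, the general theory of Verdier quotients yields for each object $X$ of $\bfK(\A)$ a triangle $S\to X\to X'\to\Si S$ with $S\in\bfK_\ac(\A)$ and $X'$ in the right orthogonal $\bfK_\ac(\A)^\perp$; the quotient functor restricts to an equivalence $\bfK_\ac(\A)^\perp\xto{\sim}\bfD(\A)$, and its inverse, composed with the inclusion into $\bfK(\A)$, is the desired fully faithful right adjoint of $\bfK(\A)\to\bfD(\A)$. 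Since $\bfK_\ac(\A)^\perp$ is a full subcategory of $\bfK(\A)$, which has small Hom-sets, the category $\bfD(\A)$ then has small Hom-sets as well. To produce the right adjoint of the inclusion I would invoke Proposition~\ref{pr:N} with $\T=\bfK(\A)$ and $\Sc=\bfK_\ac(\A)$: both categories have countable coproducts --- here I use that $\A$ is cocomplete and that coproducts of exact sequences are exact, so that $\bfK_\ac(\A)$ is closed under coproducts --- hence both have split idempotents, and it remains to show that every object of $\bfK(\A)$ admits a right $\bfK_\ac(\A)$-approximation.

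These approximations I would obtain from the chain level, exactly as in the proof of Theorem~\ref{th:adj}. By the Lemma the category $\bfC(\A)$ is locally presentable, so it suffices to check that the full subcategory $\bfC_\ac(\A)$ of acyclic complexes satisfies the hypotheses of Theorem~\ref{th:approx}; a right $\bfC_\ac(\A)$-approximation in $\bfC(\A)$ then descends to a right $\bfK_\ac(\A)$-approximation in $\bfK(\A)$, since a chain-level factorisation is in particular a factorisation up to homotopy. First, $\bfC_\ac(\A)$ is closed under filtered colimits: such colimits are computed degreewise, an acyclic complex is a $\bbZ$-indexed system of exact sequences $Z^n\to X^n\to Z^{n+1}$ glued along the cycle objects, and by hypothesis exact sequences in $\A$ are closed under filtered colimits, so the colimit complex is again acyclic. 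Second, I would verify that $\bfC_\ac(\A)$ is closed under $\a$-pure subobjects for some regular cardinal $\a$ by checking condition (1) of Proposition~\ref{pr:acc}, i.e. that there is a set of acyclic complexes of which every acyclic complex is a filtered colimit; by that proposition the same $\a$ (or a larger one) then gives closure under $\a$-pure quotients, so Theorem~\ref{th:approx} applies in either reading.

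To establish this generation-by-a-set I would use the structural description of acyclic complexes. Up to the canonical (essentially unique) choice of cycle objects, $\bfC_\ac(\A)$ is equivalent to the category of $\bbZ$-indexed coherent systems of admissible exact sequences in $\A$. Since $\A$ is locally presentable and the class of exact sequences is closed under filtered colimits, the category of admissible exact sequences is accessible, and $\bfC_\ac(\A)$ is a limit of copies of it amalgamated along the cycle objects; as accessible categories are stable under such limits along accessible functors \cite{AdRo}, it follows that $\bfC_\ac(\A)$ is accessible, which is condition (2) of Proposition~\ref{pr:acc}.

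The main obstacle is precisely this last point. Closure under filtered colimits alone never suffices for accessibility, so the argument must genuinely exploit that exactness in $\A$ is preserved by filtered colimits together with the local presentability of $\A$; the delicate part is to produce, for every acyclic complex $X$ and every small subobject, a small \emph{acyclic} subcomplex of $X$ containing it, which amounts to a downward L\"owenheim--Skolem (purity) argument of the same flavour as the one underlying Proposition~\ref{pr:ElBashir}. Once accessibility is secured, the remaining steps --- the degreewise colimit computation, the descent from $\bfC_\ac(\A)$- to $\bfK_\ac(\A)$-approximations, and the passage through Proposition~\ref{pr:N} and Bousfield localisation --- are formal.
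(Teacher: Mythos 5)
Your global strategy is the paper's: reduce to a right adjoint for the inclusion $\bfK_\ac(\A)\to\bfK(\A)$ via Bousfield localisation, produce right $\bfK_\ac(\A)$-approximations from chain-level right $\bfC_\ac(\A)$-approximations, and obtain the latter from Theorem~\ref{th:approx} applied to $\bfC_\ac(\A)\subseteq\bfC(\A)$. All of these reductions are correct, as is your verification that $\bfC_\ac(\A)$ is closed under filtered colimits and under coproducts (for the split-idempotent hypothesis in Proposition~\ref{pr:N}). The gap is the one step that carries all the weight: verifying that $\bfC_\ac(\A)$ satisfies the purity hypothesis of Theorem~\ref{th:approx}. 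You choose to do this by establishing condition (1)/(2) of Proposition~\ref{pr:acc}, i.e.\ accessibility of $\bfC_\ac(\A)$, but your argument for accessibility is circular: you assert that the category of exact sequences in $\A$ is accessible ``since $\A$ is locally presentable and the class of exact sequences is closed under filtered colimits'', which is precisely the inference you yourself rule out in your final paragraph (closure under filtered colimits does not imply accessibility). The category of exact sequences is a full subcategory of a locally presentable functor category closed under filtered colimits, exactly like $\bfC_\ac(\A)$ itself, so your amalgamated-limit argument rests on an unproven premise that is an instance of the very problem being solved; you then concede that the ``delicate part'' (a downward L\"owenheim--Skolem argument) remains to be done. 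As it stands, the essential technical input is missing.

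The paper closes exactly this gap by a more elementary route that avoids accessibility altogether: Lemma~\ref{le:derived} proves directly that $\bfC_\ac(\A)$ is closed under $\a$-pure subobjects, which is condition (3) of Proposition~\ref{pr:acc} and feeds straight into Theorem~\ref{th:approx}. The ingredients are: in a locally $\a$-presentable category kernels and cokernels commute with $\a$-filtered colimits, so passage to $\a$-pure subobjects in $\Mor\A$ is compatible with $\Ker$ and $\Coker$; every $\a$-pure monomorphism is an $\a$-filtered colimit of split monomorphisms, hence admissible because exact sequences are closed under $\a$-filtered colimits; Quillen's axioms then show that isomorphisms and admissible monomorphisms are closed under $\a$-pure subobjects in $\Mor\A$; finally, acyclicity of $X=(X^n,d^n)$ is equivalent to the morphisms $\Coker d^{n-2}\to\Ker d^n$ being invertible and the monomorphisms $\Ker d^n\to X^n$ being admissible for all $n$, and these properties pass to $\a$-pure subcomplexes. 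Any repair of your accessibility route would in effect require proving these same statements first, so the correct fix is to replace your third paragraph by an argument of this kind.
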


The proof of this theorem is based on the following lemma.

\begin{lem}\label{le:derived}
Let $\A$ be an exact category that is locally $\a$-presentable for
some regular cardinal $\a$.  Suppose that exact sequences are closed under
$\a$-filtered colimits. Then the following holds.
\begin{enumerate}
\item Let $X'\xto{f'} Y'$ be an $\a$-pure subobject of $X\xto{f} Y$ in
  $\Mor\A$. Then $\Ker f'\to X$ is an $\a$-pure subobject of $\Ker
  f\to X$, and $Y\to \Coker f'$ is an $\a$-pure subobject of $Y\to
  \Coker f$.
\item The isomorphisms in $\A$ are closed under $\a$-pure
  subobjects in $\Mor\A$.
\item The admissible monomorphisms in $\A$ are closed under $\a$-pure
  subobjects in $\Mor\A$.
\item $\bfC_\ac(\A)$ is closed under $\a$-pure subobjects in
  $\bfC(\A)$.
\end{enumerate}
\end{lem}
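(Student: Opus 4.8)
The engine for the whole lemma is the elementary observation that \emph{any functor preserving $\a$-filtered colimits preserves $\a$-pure monomorphisms}. Indeed, if $F\colon\C\to\D$ preserves $\a$-filtered colimits then so does the induced functor $\Mor F\colon\Mor\C\to\Mor\D$, since colimits in morphism categories are computed componentwise; and every functor preserves split monomorphisms (apply $F$ to a retraction identity). As an $\a$-pure monomorphism is by definition an $\a$-filtered colimit of split monomorphisms, $F$ carries it to such a colimit. The second standing fact I would use is that in a locally $\a$-presentable category $\a$-filtered colimits commute with finite limits, so the kernel functor $\Ker\colon\Mor\A\to\A$ preserves $\a$-filtered colimits, while cokernels preserve all colimits automatically. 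The hypothesis that exact sequences are closed under $\a$-filtered colimits, together with the fact that a cocomplete category has split idempotents (hence the conflations are closed under retracts), completes the list of inputs.

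For part~(1) I regard the given $\a$-pure subobject as a single $\a$-pure monomorphism $(a,b)\colon(f')\to(f)$ in $\Mor\A$, that is, a commuting square $S$ with $bf'=fa$, viewed as an object of $\Mor(\Mor\A)$. I would then build a functor $\Mor(\Mor\A)\to\Mor(\Mor\A)$ sending $S$ to the morphism of $\Mor\A$ whose two rows are $\Ker f'\to X$ and $\Ker f\to X$ (the left map induced on kernels, the right map $\id_X$), and a dual functor built from $\Coker$ producing the statement about $Y\to\Coker f'$ and $Y\to\Coker f$. Each such functor is assembled from $\Ker$ (resp.\ $\Coker$) together with the domain and codomain functors, all of which preserve $\a$-filtered colimits; hence the assembled functors preserve $\a$-filtered colimits, and so by the engine they preserve $\a$-pure monomorphisms. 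Applying them to $S$ yields exactly the two asserted $\a$-pure subobjects. This componentwise bookkeeping is the technical heart, but it is routine once the commutation of $\a$-filtered colimits with kernels is in hand.

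Parts~(2) and~(3) are then deduced from~(1). If $f$ is an isomorphism, then $\Ker f\to X$ is $(0\to X)$ and $Y\to\Coker f$ is $(Y\to 0)$; passing to domains (resp.\ codomains), which preserve $\a$-pure monomorphisms, forces $\Ker f'=0$ and $\Coker f'=0$, so $f'$ is monic with vanishing cokernel. Since this alone does not give an isomorphism in a non-abelian exact category, I would obtain~(2) from~(3): an admissible monomorphism with zero cokernel is the kernel of the zero map and hence invertible. For~(3), with $f$ an admissible monomorphism fitting into a conflation $X\xto{f}Y\to\Coker f$, applying~(1) to $(a,b)$ and then once more to the resulting $\a$-pure subobject of the cokernel arrow shows that the three-term complex $X'\to Y'\to\Coker f'$ is an $\a$-pure subobject, in the locally presentable category of three-term complexes, of the conflation $X\to Y\to\Coker f$. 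It then suffices to know that an $\a$-pure subobject of a conflation is a conflation: the conflations form a full subcategory closed under $\a$-filtered colimits (by hypothesis) and under retracts (split idempotents), which one checks is accessible, whereupon Proposition~\ref{pr:acc} gives closure under $\a$-pure subobjects. Feeding this back, $f'$ is an admissible monomorphism.

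Part~(4) I would argue spot by spot: $X$ is acyclic exactly when each differential factors as $X^n\to Z^{n+1}\to X^{n+1}$ with the cycle objects $Z^{n}$ fitting into conflations $Z^{n}\to X^{n}\to Z^{n+1}$. Given an $\a$-pure subcomplex $X'\hookrightarrow X$ in $\bfC(\A)$, the induced maps on differentials are $\a$-pure subobjects in $\Mor\A$, so parts~(1)--(3) transport the relevant kernels, cokernels and admissible monomorphisms; one extracts the cycle objects $Z'^{\,n}$ as the (admissible) kernels of the $d'^{\,n}$ and assembles the required conflations for $X'$, which is bookkeeping. The genuine difficulty is concentrated in~(3): because the hypotheses---local presentability together with exactness of $\a$-filtered colimits---are \emph{not} self-dual, the admissible-epimorphism half of a conflation cannot be obtained by dualising the admissible-monomorphism half, and vanishing of kernel and cokernel is insufficient to force admissibility. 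The crux is therefore precisely the closure of the conflations under $\a$-pure subobjects, together with the cardinal bookkeeping needed to match the $\a$ of the hypothesis with the regular cardinal produced by Proposition~\ref{pr:acc}.
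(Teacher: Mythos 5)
Your part~(1) is correct and coincides with the paper's argument (kernels and cokernels commute with $\a$-filtered colimits in a locally $\a$-presentable category), and your reduction of~(2) to~(3) is legitimate: an isomorphism is an admissible monomorphism, so~(3) together with the vanishing of the cokernel obtained from~(1) forces invertibility. The genuine gap is in~(3), which you yourself flag as the crux. You propose to obtain closure of the conflations under $\a$-pure subobjects from Proposition~\ref{pr:acc}, after ``checking'' that the full subcategory of conflations is accessible; but no such check is available from what you have. By Proposition~\ref{pr:acc} itself, for a full subcategory closed under filtered colimits, accessibility is \emph{equivalent} to closure under $\kappa$-pure subobjects for some regular cardinal $\kappa$; so asserting accessibility of the conflations is asserting precisely (a version of) the statement to be proved, and closure under $\a$-filtered colimits and retracts does not imply it --- that implication failing is exactly the content of the equivalence in Proposition~\ref{pr:acc}. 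Even if accessibility were granted, two further mismatches remain: Proposition~\ref{pr:acc} assumes closure under $\aleph_0$-filtered colimits, while the lemma only assumes closure under $\a$-filtered ones, and the cardinal produced would be some $\kappa$ rather than the given $\a$, whereas the conclusion of the lemma (and its use in Theorem~\ref{th:derived}) concerns the given $\a$. Your ``cardinal bookkeeping'' remark acknowledges but does not resolve this.

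The paper proves~(3) by a short direct argument from the axioms of an exact category, with no accessibility input at all: every split monomorphism is admissible (Quillen's property~a)), hence every $\a$-pure monomorphism is admissible, being an $\a$-filtered colimit of split monomorphisms --- this is where the hypothesis that exact sequences are closed under $\a$-filtered colimits enters. Then, in the commuting square witnessing the $\a$-pure subobject, the composite $X'\to X\to Y$ is admissible by the composition axiom~b), it equals $X'\to Y'\to Y$, and the cancellation axiom~c) (the ``obscure axiom'': a morphism admitting a cokernel is an admissible monomorphism as soon as its composite with some morphism is one) yields that $X'\xto{f'} Y'$ is admissible. Replacing your accessibility step by this argument repairs the proof; note also that the paper handles~(2) directly, without~(3), by observing that $f'$ is an epimorphism by~(1) while some composite $gf'$ is an $\a$-pure, hence regular, hence extremal monomorphism. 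Your sketch of~(4) then goes through essentially as in the paper, where acyclicity is tested by invertibility of $\Coker d^{n-2}\to\Ker d^n$ and admissibility of $\Ker d^n\to X^n$, using~(1)--(3).
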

\begin{proof}
(1) This follows from the fact that kernels and cokernels are
  preserved by taking $\a$-filtered colimits \cite[Korollar~7.12]{GU}.

(2) Let $f$ be an $\a$-pure subobject of an isomorphism. It follows
  from (1) that $f$ is an epimorphism. On the other hand, there is a
  morphism $g$ such that the composite $gf$ is an $\a$-pure
  monomorphism. Thus $gf$ is a regular monomorphism
  \cite[Proposition~2.31]{AdRo} and therefore extremal. It follows
  that $f$ is an isomorphism.

(3) Consider an admissible monomorphism $X\to Y$ and an $\a$-pure
  subobject $X'\to Y'$. Thus there is a commuting square
$$\xymatrix{X'\ar[r]\ar[d]&Y'\ar[d]\\ X\ar[r]&Y}
$$ such that the vertical morphisms are $\a$-pure monomorphisms. Any
split monomorphism is admissible (property a) of an exact category in
\cite[\S2]{Q}), and therefore every $\a$-pure monomorphism is
admissible, since exact sequences are closed under $\a$-filtered
colimits. It follows that the composite $X'\to Y'\to Y$ is an
admissible monomorphism (property b) in \cite[\S2]{Q}). Thus $X'\to
Y'$ is an admissible monomorphism (property c) in \cite[\S2]{Q}). 

(4) First observe that a complex $X=(X^n,d^n)$ is acyclic if and only if
for each $n\in\bbZ$ the morphism $\Coker d^{n-2}\to\Ker d^n$ is
invertible and the monomorphism $\Ker d^n\to X^n$ is admissible.

Now fix an $\a$-pure monomorphism $X\to\bar X$ in $\bfC(\A)$ such that
$\bar X$ is acyclic. It follows from (1) that $\Coker d^{n-2}\to\Ker
d^n$ is an $\a$-pure subobject of $\Coker \bar d^{n-2}\to\Ker \bar
d^n$ in $\Mor\A$, and that $\Ker d^n\to X^n$ is an $\a$-pure subobject
of $\Ker \bar d^n\to \bar X^n$. Isomorphisms and admissible
monomorphisms in $\A$ are closed under $\a$-pure subobjects by (2) and
(3). We conclude that $X$ is acyclic.
\end{proof}

\begin{proof}[Proof of Theorem~\ref{th:derived}]
Consider the full subcategory $\bfC_\ac(\A)$ of acyclic complexes in
$\bfC(\A)$.  The assumption on the exact structure implies that
$\bfC_\ac(\A)$ is closed under filtered colimits, and
Lemma~\ref{le:derived} implies that $\bfC_\ac(\A)$ is closed under
$\a$-pure subobjects for some regular cardinal $\a$.  Thus every
object in $\bfC(\A)$ admits a right $\bfC_\ac(\A)$-approximation by
Theorem~\ref{th:approx}. Applying Proposition~\ref{pr:N}, it follows
that the inclusion $\bfK_\ac(\A)\to\bfK(\A)$ admits a right adjoint.
A standard argument \cite[Proposition~9.1.18]{Ne} then shows that the
quotient functor $\bfK(\A)\to\bfD(\A)$ admits a right adjoint. Note
that this right adjoint is fully faithful
\cite[Proposition~I.1.3]{GZ}. Therefore $\bfD(\A)$ has small Hom-sets.
\end{proof}

The following corollary is a straightforward generalisation of
Theorem~\ref{th:derived}; its proof requires only minor modifications.

\begin{cor}\label{co:derived}
Let $\A$ be an exact category that is locally
presentable, and suppose that exact sequences are closed under
filtered colimits.  Let $\B$ be a full additive subcategory of $\A$
that is extension closed, closed under filtered colimits, and closed
under $\a$-pure subobjects or under $\a$-pure quotients for some
regular cardinal $\a$.  Then the canonical functor
$\bfK(\B)\to\bfD(\B)$ admits a fully faithful right adjoint.
\end{cor}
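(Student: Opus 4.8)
The plan is to follow the proof of Theorem~\ref{th:derived} verbatim, replacing $\A$ by $\B$ throughout, and to check that each ingredient used there survives the passage to the subcategory. The key structural observation is that since $\B$ is extension closed, it inherits an exact structure from $\A$ in which a sequence is exact precisely when it is exact in $\A$; this is exactly the setup recalled before Theorem~\ref{th:derived}. So the derived category $\bfD(\B)=\bfK(\B)/\bfK_\ac(\B)$ makes sense, and our task is to produce a right adjoint to the quotient functor $\bfK(\B)\to\bfD(\B)$, which by the argument of Theorem~\ref{th:derived} reduces to showing that the inclusion $\bfK_\ac(\B)\to\bfK(\B)$ admits a right adjoint, and then invoking \cite[Proposition~9.1.18]{Ne} and \cite[Proposition~I.1.3]{GZ}.

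First I would verify that $\bfC(\B)$ is itself locally presentable, so that Theorem~\ref{th:approx} can be applied. Here the hypotheses on $\B$ pay off: $\B$ is closed under filtered colimits and under $\a$-pure subobjects (or quotients), and these are precisely the conditions in Proposition~\ref{pr:acc} guaranteeing accessibility; combined with closure under the relevant limits this yields local presentability of $\bfC(\B)$, exactly as in the lemma establishing local presentability of $\bfC(\A)$. Second, I would run Lemma~\ref{le:derived} with $\B$ in place of $\A$ to conclude that $\bfC_\ac(\B)$ is closed under $\a$-pure subobjects in $\bfC(\B)$; since exact sequences in $\B$ are by definition exact sequences in $\A$ with endterms in $\B$, and these are closed under filtered colimits in $\A$ and $\B$ is closed under filtered colimits, the hypothesis ``exact sequences are closed under $\a$-filtered colimits'' holds in $\B$ as well. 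Closure of $\bfC_\ac(\B)$ under filtered colimits follows from the same consideration.

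With these two facts in hand, the rest is automatic. Theorem~\ref{th:approx} applied to the pair $\bfC_\ac(\B)\subseteq\bfC(\B)$ gives that every object of $\bfC(\B)$ admits a right $\bfC_\ac(\B)$-approximation, hence every object of $\bfK(\B)$ admits a right $\bfK_\ac(\B)$-approximation. Proposition~\ref{pr:N} then yields a right adjoint to $\bfK_\ac(\B)\to\bfK(\B)$, and the standard Bousfield-localisation argument \cite[Proposition~9.1.18]{Ne} produces the desired right adjoint to $\bfK(\B)\to\bfD(\B)$, which is fully faithful by \cite[Proposition~I.1.3]{GZ}.

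The main obstacle I anticipate is not any single hard step but the bookkeeping in transferring Lemma~\ref{le:derived} to $\B$: one must confirm that kernels, cokernels, and the characterisation of acyclicity used there are computed compatibly whether taken in $\B$ or in $\A$. The subtle point is that kernels and cokernels of the relevant morphisms need not exist in $\B$ in general, but they do exist for morphisms appearing in exact sequences of $\B$ (admissible mono/epimorphisms), and it is only for these that the acyclicity criterion in part~(4) of the lemma is invoked; extension closedness ensures the ambient $\a$-filtered colimit computations in $\A$ land back in $\B$. Verifying this compatibility is where the ``minor modifications'' promised in the statement actually reside.
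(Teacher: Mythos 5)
Your proposal has a genuine gap at its very first step: the claim that $\bfC(\B)$ is locally presentable. Proposition~\ref{pr:acc} yields only that $\B$ is \emph{accessible}; local presentability requires in addition cocompleteness (this is how the lemma on $\bfC(\A)$ at the start of Section~3 concludes, via closure of $\bfC(\A)$ under all limits and colimits in $\A^\bbZ$ and \cite[Corollary~2.47]{AdRo}). The hypotheses on $\B$ --- closure under filtered colimits and under $\a$-pure subobjects or quotients --- give no control whatsoever over cokernels or arbitrary colimits, so neither $\B$ nor $\bfC(\B)$ can be assumed cocomplete; the motivating example $\B=\Flat\bbX\subseteq\Qcoh\bbX$ illustrates exactly this failure. (Worse, in the ``$\a$-pure quotients'' case of the statement, Proposition~\ref{pr:acc} does not even give accessibility of $\B$, since closure under $\a$-pure quotients is only a consequence, not one of the equivalent conditions.) Consequently you cannot apply Theorem~\ref{th:approx} to the pair $\bfC_\ac(\B)\subseteq\bfC(\B)$, because that theorem requires the \emph{ambient} category to be locally presentable; and for the same reason you cannot ``run Lemma~\ref{le:derived} with $\B$ in place of $\A$'', since that lemma also assumes local presentability and uses kernels and cokernels, which $\B$ need not possess.

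The repair --- and it is what the paper actually does --- is to keep the ambient category equal to $\bfC(\A)$ throughout: one shows that $\bfC_\ac(\B)$ (acyclicity taken with respect to the exact structure inherited by $\B$) is closed under filtered colimits and $\a$-pure subobjects \emph{as a subcategory of $\bfC(\A)$}, applies Theorem~\ref{th:approx} there, and observes that a right $\bfC_\ac(\B)$-approximation of an object of $\bfC(\B)$ constructed inside $\bfC(\A)$ is automatically one inside the full subcategory $\bfC(\B)$; Proposition~\ref{pr:N} and \cite[Proposition~9.1.18]{Ne}, \cite[Proposition~I.1.3]{GZ} then finish as you describe. The point needing genuine extra care --- which your final paragraph gestures at but does not resolve --- is that a complex over $\B$ which is acyclic in $\A$ need not be acyclic in $\B$: one must also get the cycle objects into $\B$. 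Concretely, one needs that admissible monomorphisms \emph{of the exact category $\B$} are closed under $\a$-pure subobjects in $\Mor\A$; this is proved by characterising them as the admissible monomorphisms of $\A$ whose cokernel lies in $\B$, and then combining Lemma~\ref{le:derived} with the closure of $\B$ under $\a$-pure subobjects to see that both the subobject morphism and its cokernel land back in $\B$. Extension closedness of $\B$ alone, which is what you invoke, does not accomplish this.
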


\begin{proof}
We follow the proof of Theorem~\ref{th:derived} and assume first that
$\B$ is closed under $\a$-pure subobjects. 

As before, one shows that $\bfC_\ac(\B)$ is closed under filtered
colimits and $\a$-pure subobjects, viewed as a subcategory of
$\bfC(\A)$.  Some extra care is needed for the fact that admissible
monomorphisms in $\B$ are closed under $\a$-pure subobjects in
$\Mor\A$. Note that a morphism in $\B$ is an admissible monomorphism
if and only if it is an admissible monomorphism in $\A$ and its
cokernel belongs to $\B$. Thus we consider an admissible monomorphism
$X\xto{f} Y$ in $\B$ and an $\a$-pure subobject $X'\xto{f'} Y'$. Then
$f'$ is an admissible monomorphism in $\A$ by Lemma~\ref{le:derived},
and it belongs to $\B$ since $\B$ is closed under $\a$-pure
subobjects. Moreover, $\Coker f'$ is an $\a$-pure subobject of $\Coker
f$ and belongs therefore to $\B$. It follows that $f'$ is an
admissible monomorphism in $\B$.

The rest of the proof goes as before. Thus every object in $\bfC(\B)$
admits a right  $\bfC_\ac(\B)$-approximation, and it follows that
the inclusion $\bfK_\ac(\B)\to\bfK(\B)$ admits a right adjoint.

The case that $\B$ is closed under $\a$-pure quotients is similar and
therefore left to the reader.
\end{proof}

\begin{rem}
It seems to be an interesting project to establish in the context of
Theorem~\ref{th:derived} a Quillen model structure on the category
$\bfC(\A)$ such that cofibrations are the degreewise admissible
monomorphisms and weak equivalences are the quasi-isomorphisms. This
would extend the work of Beke in \cite{Be}.  A strategy for this
programme has been pointed out by Maltsiniotis \cite{Ma}.
\end{rem}

The following example has been studied by Neeman \cite{Ne2008,Ne2006}
and Murfet \cite{Mu}.

\begin{exm}
Let $\bbX=(\bbX,\Oc_\bbX)$ be a scheme. The flat $\Oc_\bbX$-modules
form an extension closed subcategory of $\Qcoh \bbX$. Thus
Corollary~\ref{co:derived} can be applied. It follows that the
canonical functor $\bfK(\Flat \bbX)\to\bfD(\Flat \bbX)$ admits a right
adjoint.

Let $\Inj\bbX$ denote the full subcategory consisting of all injective
modules in $\Qcoh \bbX$.  If $\bbX$ is noetherian and separated, then
tensoring with a dualising complex induces an equivalence $\bfD(\Flat
\bbX)\xto{\sim}\bfD(\Inj\bbX)$; this is an `infinite completion' of
Grothendieck duality.
\end{exm}

\section{The pure derived category}

In this section we investigate the derived category of an additive
category with respect to its pure exact structure. It seems natural to
focus on this exact structure because it is the smallest one such that
exact sequences are closed under filtered colimits. An example of
particular interest is the pure derived category of a module category.

Locally finitely presented categories in the sense of Crawley-Boevey
form a convenient setting for studying purity \cite{CB}. Thus we fix
an additive category $\A$ that is \emph{locally finitely presented}.
This means $\A$ admits filtered colimits and every object in $\A$ can
be written as a filtered colimit of some fixed set of finitely
presented objects. Recall that an object $X$ is \emph{finitely
  presented} if the functor $\Hom_\A(X,-)$ preserves filtered
colimits. Denote by $\fp\A$ the full subcategory formed by all
finitely presented objects. We consider the pure exact structure, that
is, a sequence $X\to Y\to Z$ of morphisms in $\A$ is \emph{pure exact}
if the induced sequence
$0\to\Hom_\A(C,X)\to\Hom_\A(C,Y)\to\Hom_\A(C,Z)\to 0$ is exact for
each finitely presented object $C$. The projective objects with
respect to this exact structure are called \emph{pure projective};
they are precisely the direct summands of coproducts of finitely
presented objects. The derived category with respect to this exact
structure is by definition the \emph{pure derived category}.

The next result combines Corollary~\ref{co:derived} with recent work
of Neeman \cite{Ne2008}.

\begin{thm}\label{th:pure}
Let $\A$ be a locally finitely presented additive category, endowed
with the pure exact structure. Then there exists the following
recollement.
$$\xymatrixrowsep{3pc} \xymatrixcolsep{2pc}\xymatrix{
\bfK_\ac(\A)\,\ar[rr]|-{\inc}&&\,\bfK(\A)\, \ar[rr]|-{\can}
\ar@<1.2ex>[ll]^-{}\ar@<-1.2ex>[ll]_-{}&&
\,\bfD(\A)\ar@<1.2ex>[ll]^-{}\ar@<-1.2ex>[ll]_-{} }$$ Moreover, the
composite $\bfK(\Proj\A)\xto{\inc}\bfK(\A)\xto{\can} \bfD(\A)$
is an equivalence.
\end{thm}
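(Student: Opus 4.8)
The plan is to establish the recollement first and then deduce the equivalence as a consequence, using Neeman's comparison of homotopy and derived categories of pure-projective objects. Let me think through each piece.

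For the recollement, I need to show that the inclusion $\inc\colon\bfK_\ac(\A)\to\bfK(\A)$ admits both a left and a right adjoint, and equivalently that the quotient functor $\can\colon\bfK(\A)\to\bfD(\A)$ admits both adjoints. The pure exact structure is, by construction, the smallest exact structure on $\A$ for which exact sequences are closed under filtered colimits; in particular every pure-exact sequence is a filtered colimit of split exact sequences, so the hypotheses of Theorem~\ref{th:derived} are met (with $\B=\A$ and $\a=\aleph_0$, since $\A$ is locally finitely presented). Thus the right adjoint of $\can$ — equivalently the Bousfield localization giving a right adjoint to $\inc\colon\bfK_\ac(\A)\to\bfK(\A)$ — exists by Theorem~\ref{th:derived}. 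For the other side I would invoke the dual results from Section~5 on left approximations and left adjoints (the left-hand adjoints in the recollement), which the excerpt announces; combined, these yield that $\bfK_\ac(\A)\to\bfK(\A)$ is both a colocalizing and a localizing subcategory, which is exactly the data of the recollement displayed.

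For the equivalence $\bfK(\Proj\A)\xrightarrow{\sim}\bfD(\A)$, recall that the pure-projective objects are precisely the direct summands of coproducts of finitely presented objects, and that they are the projectives for the pure exact structure. The natural strategy is: first, the composite $\bfK(\Proj\A)\to\bfD(\A)$ is fully faithful. This uses that for a complex $P$ of pure-projectives, the pure-projectivity forces $\Hom_{\bfK(\A)}(P,-)$ to kill acyclic complexes, so $\Hom_{\bfK(\A)}(P,Y)\cong\Hom_{\bfD(\A)}(P,Y)$; this is the standard argument that bounded-above (or, with the adjoint machinery, unbounded) complexes of projectives compute morphisms in the derived category. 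Second, the composite is essentially surjective (dense up to the triangulated structure): every object of $\bfD(\A)$ is isomorphic to a complex of pure-projectives. This is where Neeman's recent work \cite{Ne2008} enters — it provides, for complexes over a locally finitely presented category with the pure exact structure, that the left adjoint to $\can$ lands in $\bfK(\Proj\A)$ and furnishes a pure-projective resolution realizing each object of $\bfD(\A)$.

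The main obstacle I expect is essential surjectivity, i.e.\ producing functorial pure-projective resolutions for \emph{unbounded} complexes and showing they assemble into an object of $\bfK(\Proj\A)$ mapping to the given object of $\bfD(\A)$. For bounded-above complexes this is classical, but in the unbounded setting one cannot naively splice resolutions degreewise; one needs the left adjoint constructed via the recollement (the left-hand adjoint $\bfD(\A)\to\bfK(\A)$) together with the observation that its image consists of complexes of pure-projectives — precisely the content extracted from \cite{Ne2008}. Concretely, I would show the left adjoint to $\can$ is fully faithful with image $\bfK(\Proj\A)$: full faithfulness is formal from the recollement, and identifying the image with $\bfK(\Proj\A)$ reduces to checking that the adjoint sends each object to a complex of pure-projectives, which follows because the cofibrant objects for the pure exact structure are exactly the complexes of pure-projectives. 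Once full faithfulness and the image identification are in hand, the equivalence $\bfK(\Proj\A)\xrightarrow{\sim}\bfD(\A)$ follows immediately.
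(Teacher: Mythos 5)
Your proposal has two genuine gaps, and they occur exactly at the points where the paper has to do real work.

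First, you cannot apply Theorem~\ref{th:derived} with ``$\B=\A$'': that theorem requires $\A$ to be \emph{locally presentable}, i.e.\ cocomplete with a presentable generator, whereas a locally finitely presented category in the sense fixed in Section~4 is only assumed to admit \emph{filtered} colimits. Such a category need not be cocomplete and need not have products; the paper itself treats these as extra hypotheses (Crawley-Boevey's criterion quoted before Theorem~\ref{th:pure-der}, and Remark~\ref{re:pure-der}). For the same reason your appeal to the Section~5 duals for the left-hand adjoints fails outright: Proposition~\ref{pr:leftapprox}, Corollary~\ref{co:leftadj} and Corollary~\ref{co:leftderived} all require closure under set-indexed products, which $\A$ need not possess. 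The paper's way around the first problem is to set $\C=\fp\A$ and embed $\A$ into $\Mod\C$ via $X\mapsto\Hom_\A(-,X)|_\C$, identifying $\A$ with $\Flat\C$ and the pure exact structure with the exact structure induced from $\Mod\C$; Corollary~\ref{co:derived}, applied to $\B=\Flat\C$ inside the locally presentable category $\Mod\C$, then yields the two right adjoints.

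Second, your argument for the equivalence $\bfK(\Proj\A)\xto{\sim}\bfD(\A)$ is circular. The claims that ``pure-projectivity forces $\Hom_{\bfK(\A)}(P,-)$ to kill acyclic complexes'' and that ``the cofibrant objects for the pure exact structure are exactly the complexes of pure-projectives'' are not standard or formal facts; they are reformulations of the statement $\bfK(\Proj\A)^\perp=\bfK_\ac(\A)$, which is the hard theorem \cite[Theorem~8.6]{Ne2008}. No formal argument from projectivity can exist: for the ordinary exact structure on $\Mod A$ with $A=k[\epsilon]/(\epsilon^2)$, the complex $\cdots\xto{\epsilon}A\xto{\epsilon}A\xto{\epsilon}\cdots$ is an acyclic, non-contractible complex of projectives, so unbounded complexes of projectives are \emph{not} in general left orthogonal to acyclic complexes; pure-acyclicity enters Neeman's proof in an essential way. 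Moreover the logical order of your plan is backwards: you build the left half of the recollement first (by the unavailable Section~5 route) and then use its left adjoint to obtain the equivalence, whereas in the paper the left half of the recollement and the equivalence are obtained \emph{simultaneously} from Neeman's results --- \cite[Proposition~8.1]{Ne2008} gives a right adjoint to the inclusion $\bfK(\Proj\A)\to\bfK(\A)$, \cite[Theorem~8.6]{Ne2008} identifies $\bfK(\Proj\A)^\perp$ with $\bfK_\ac(\A)$, and then standard Bousfield localisation arguments \cite[\S9]{Ne} produce both the left adjoint to $\bfK_\ac(\A)\to\bfK(\A)$ and the equivalence $\bfK(\Proj\A)\simeq\bfK(\A)/\bfK_\ac(\A)=\bfD(\A)$. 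So the one ingredient you would need to make your second step work is exactly the ingredient your first step cannot supply.
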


\begin{proof}
We need to show that the functors $\bfK_\ac(\A)\to\bfK(\A)$ and
$\bfK(\A)\to\bfD(\A)$ admit left and right adjoints.

Set $\C=\fp\A$ and consider the category $\Mod\C$ of $\C$-modules,
that is, additive functors $\C^\op\to\Ab$. The functor $\A\to \Mod\C$
taking an object $X$ to $\Hom_\A(-,X)|_\C$ is fully faithful and
preserves filtered colimits; it identifies $\A$ with the full
subcategory $\Flat\C$ of flat $\C$-modules \cite[\S1.4]{CB}. It
follows from Corollary~\ref{co:derived} that the functors
$\bfK_\ac(\A)\to\bfK(\A)$ and $\bfK(\A)\to\bfD(\A)$ admit right
adjoints.

The other half of the recollement has been established in
\cite{Ne2008}. In fact, we can identify the category $\Proj\C$ of
projective $\C$-modules with $\Proj\A$. It follows from
\cite[Proposition~8.1]{Ne2008} that the inclusion
$\bfK(\Proj\A)\to\bfK(\A)$ admits a right adjoint. Moreover,
$\bfK(\Proj\A)^\perp=\bfK_\ac(\A)$ by \cite[Theorem~8.6]{Ne2008}, that
is, an object $Y$ in $\bfK(\A)$ is acyclic if and only if
$\Hom_{\bfK(\A)}(X,Y)=0$ for all $X$ in $\bfK(\Proj\A)$. Using
standard arguments \cite[\S9]{Ne}, it follows that the inclusion
$\bfK_\ac(\A)\to\bfK(\A)$ admits a left adjoint and that the composite
$\bfK(\Proj\A)\xto{\inc}\bfK(\A)\xto{\can} \bfD(\A)$ is an
equivalence.
\end{proof}

Let us reformulate this result in the form which is due to Neeman.

\begin{cor}[{\cite[Remark~3.2]{Ne2006}}] 
\pushQED{\qed} Let $A$ be a ring with several objects. Then there
exists the following recollement.
$$\xymatrixrowsep{3pc} \xymatrixcolsep{2pc}\xymatrix{ \bfK_\ac(\Flat
A)\,\ar[rr]|-{\inc}&&\,\bfK(\Flat A)\, \ar[rr]|-{}
\ar@<1.2ex>[ll]^-{}\ar@<-1.2ex>[ll]_-{}&& \,\bfK(\Proj
A)\ar@<1.2ex>[ll]^-{}\ar@<-1.2ex>[ll]|-{\inc} }$$ Moreover, the
composite $\bfK(\Proj A)\xto{\inc}\bfK(\Flat A)\xto{\can} \bfD(\Flat A)$ is an
equivalence.\qedhere
\end{cor}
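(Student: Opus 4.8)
The plan is to deduce the corollary from Theorem~\ref{th:pure} by specialising the locally finitely presented category $\A$ to $\Flat A$, equipped with its pure exact structure. First I would check that $\Flat A$ is locally finitely presented: filtered colimits of flat modules are flat, and (by Govorov--Lazard) every flat module is a filtered colimit of finitely generated projective modules. The finitely presented objects of $\Flat A$ are then precisely the finitely generated projective $A$-modules, so that $\fp(\Flat A)=\proj A$ and $\A\cong\Flat(\proj A)$ in the sense of \cite{CB}.

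Next I would identify the pure projective objects of $\Flat A$. By definition these are the direct summands of coproducts of finitely presented objects; since $\fp(\Flat A)=\proj A$, they are exactly the direct summands of coproducts of finitely generated projective $A$-modules, that is, the projective $A$-modules. Hence $\Proj(\Flat A)=\Proj A$. Now applying Theorem~\ref{th:pure} to $\A=\Flat A$ furnishes the recollement $\bfK_\ac(\Flat A)\to\bfK(\Flat A)\to\bfD(\Flat A)$ together with the equivalence given by the composite $\bfK(\Proj A)=\bfK(\Proj(\Flat A))\xto{\inc}\bfK(\Flat A)\xto{\can}\bfD(\Flat A)$. This is precisely the asserted equivalence.

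It remains to transport the right-hand term of the recollement along this equivalence: this replaces $\bfD(\Flat A)$ by $\bfK(\Proj A)$, turns $\can$ into the middle functor $\bfK(\Flat A)\to\bfK(\Proj A)$ of the display, and identifies the inclusion $\bfK(\Proj A)\to\bfK(\Flat A)$ with the adjoint section of $\can$ supplied by the equivalence, yielding exactly the recollement in the statement. The only points requiring genuine verification are the two identifications above---that $\Flat A$ is locally finitely presented with $\fp(\Flat A)=\proj A$, and that its pure projectives coincide with $\Proj A$---both standard facts about flat modules over a ring with several objects \cite{CB}. Granting these, the result is a pure translation of Theorem~\ref{th:pure}, so I expect no serious obstacle beyond the bookkeeping needed to match the adjoints produced by that theorem with the arrows and labels of the displayed recollement.
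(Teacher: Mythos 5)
Your proposal is correct and takes essentially the same approach as the paper: there the corollary carries no separate proof, being stated as an immediate reformulation of Theorem~\ref{th:pure} via exactly the Crawley--Boevey identifications you make explicit ($\Flat A$ is locally finitely presented with $\fp(\Flat A)=\proj A$, hence its pure projectives are $\Proj A$ and its pure exact structure is the one inherited from $\Mod A$). Filling in these routine identifications and transporting the recollement along the equivalence $\bfK(\Proj A)\simeq\bfD(\Flat A)$, as you do, is all that is needed.
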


It is a remarkable fact that we have an equivalence $\bfK(\Proj
A)\xto{\sim} \bfD(\Flat A)$, even though it may
happen that flat $A$-modules have infinite projective dimension.  Thus
it would be interesting to have necessary and sufficient conditions for
an exact category $\A$ having enough projective objects, such that the
composite $\bfK(\Proj\A)\xto{\inc}\bfK(\A)\xto{\can} \bfD(\A)$ is an
equivalence.

Let $A$ be a ring. We denote by $\mod A$ the category of finitely
presented $A$-modules and let $\proj A=\Proj A\cap\mod A$. Suppose
that $A^\op$ is \emph{coherent}, that is, the category $\mod A^\op$ is
abelian. Then we have the following description of the compact objects
of $\bfK(\Proj A)$ which is due to J{\o}rgensen and Neeman
\cite{Jo,Ne2008}. 

Given any triangulated category $\T$, we denote by
$\T^c$ the full subcategory formed by all compact objects.
 
\begin{prop}[{\cite[Proposition~7.14]{Ne2008}}]\label{pr:comp}
\pushQED{\qed} Let $A$ be a ring with several objects and suppose that
$A^\op$ is coherent. Then the triangulated category $\bfK(\Proj A)$ is
compactly generated and there is an equivalence
\[\bfD^b(\mod A^\op)^\op\xto{\sim}\bfK^{-,b}(\proj
A^\op)^\op\xto{\Hom_{A^\op}(-,A)}\bfK(\Proj A)^c.\qedhere\]
\end{prop}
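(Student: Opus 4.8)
The plan is to factor the asserted equivalence through its two displayed arrows, dispatch each, and then prove that $\bfK(\Proj A)$ is compactly generated with the stated compacts. The first arrow is soft: coherence of $A^\op$ means $\mod A^\op$ is abelian with enough projectives, namely $\proj A^\op$, and kernels of maps between finitely presented modules are again finitely presented, so every object of $\mod A^\op$ admits a resolution by objects of $\proj A^\op$. Comparing a bounded complex with such a resolution yields $\bfD^b(\mod A^\op)\xto{\sim}\bfK^{-,b}(\proj A^\op)$ in the usual way, and passage to opposite categories is formal.

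The second arrow is the duality $D=\Hom_{A^\op}(-,A)$. On finitely generated projectives it is a contravariant equivalence $\proj A^\op\to\proj A$ with quasi-inverse $\Hom_A(-,A)$, interchanging left and right modules; extended degreewise it gives a duality $\bfK(\proj A^\op)^\op\xto{\sim}\bfK(\proj A)$. Restricting, $D$ carries a bounded-above $Q$ with bounded cohomology to a bounded-below complex $DQ$ of finitely generated projective $A$-modules, and the resulting functor $\bfK^{-,b}(\proj A^\op)^\op\to\bfK(\Proj A)$ is fully faithful. Writing $\Sc$ for its essential image---a thick subcategory, since $\bfD^b(\mod A^\op)$ is idempotent complete---the task reduces to showing that every object of $\Sc$ is compact, that $\Sc$ generates $\bfK(\Proj A)$, and that these exhaust the compacts.

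Compactness of the $DQ$ is the crux, and the step I expect to be the main obstacle. For finitely generated projective $P$ one has the natural isomorphism $\Hom_A(\Hom_{A^\op}(P,A),-)\cong P\otimes_A-$; extending it degreewise identifies $\Hom_{\bfK(A)}(DQ,X)$ with $H^0$ of the \emph{product} total complex of the double complex $Q\otimes_A X$. The obstruction is precisely that this is an infinite product over the degrees of $Q$, and products do not commute with coproducts, so compactness is not formal and the bounded cohomology of $Q$ must intervene. Writing $Q$ as the inverse limit of its brutal truncations $\sigma_{\geq-m}Q$---finite complexes of finitely generated projectives, whose duals $D\sigma_{\geq-m}Q$ are perfect and hence compact---one realises $DQ$ as the filtered, degreewise split colimit, and so the homotopy colimit, of the compacts $D\sigma_{\geq-m}Q$. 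A Milnor sequence then presents $\Hom_{\bfK(A)}(DQ,-)$ through $\varprojlim$ and $\varprojlim^1$ of the coproduct-preserving functors $\Hom_{\bfK(A)}(D\sigma_{\geq-m}Q,-)$; the heart of the matter, Neeman's central computation, is that bounded cohomology of $Q$ forces this tower to stabilise on cohomology after finitely many steps, annihilating the $\varprojlim^1$ term and making $\Hom_{\bfK(A)}(DQ,-)$ commute with coproducts.

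For generation I would test an object $X$ of $\bfK(\Proj A)$ against the family $\{DQ\}$. The object $A=DA$ lies in $\Sc$ and satisfies $\Hom_{\bfK(A)}(A,X[n])\cong H^n(X)$, so joint vanishing already forces $X$ acyclic. For acyclic $X$ the direct-sum and product totalisations of $Q\otimes_A X$ differ exactly by a $\varprojlim^1$ term, and it is this discrepancy---visible only through the unbounded duals $DQ$ arising from modules of infinite projective dimension---that detects whether the acyclic complex $X$ is contractible; a limiting argument then shows that orthogonality to all $DQ$ forces $X\cong 0$. Hence $\Sc$ is a set of compact objects generating $\bfK(\Proj A)$. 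Since $\bfK(\Proj A)$ has coproducts it has split idempotents, and the standard description of the compacts of a compactly generated triangulated category as the thick closure of a generating set identifies $\bfK(\Proj A)^c$ with the thick subcategory $\Sc$, which is the asserted equivalence.
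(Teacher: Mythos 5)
First, a framing remark: the paper does not prove this proposition at all --- it is quoted verbatim from Neeman \cite{Ne2008} and stated with a \qed\ --- so the only meaningful comparison is with Neeman's own proof. Measured against that, your architecture is right (the two displayed arrows, full faithfulness of $D$, thickness of the essential image via idempotent completeness of $\bfD^b(\mod A^\op)$, and the reduction of the description of $\bfK(\Proj A)^c$ to ``compactness plus generation plus thick closure''), but there is a genuine gap exactly at the step you yourself call the crux. The claim that bounded cohomology of $Q$ forces the tower $\Hom_{\bfK(A)}(D\sigma_{\geq-m}Q,Y)$ to stabilise after finitely many steps is false. The cone of $D\sigma_{\geq-m}Q\to D\sigma_{\geq-m-1}Q$ is the single projective $DQ^{-m-1}$ placed in degree $m+1$, so the transition maps sit in exact sequences whose outer terms are $H^{m+1}(Q^{-m-1}\otimes_A Y)$ and $H^{m+2}(Q^{-m-1}\otimes_A Y)$; as soon as $Y$ has unbounded cohomology (e.g.\ $Y=\bigoplus_{i\geq 0}A[-i]$, an object of $\bfK(\Proj A)$) these groups refuse to vanish for every $m$. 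Concretely, over $A=k[x]/(x^2)$ with $Q$ the free resolution of $k$ and $Y=\bigoplus_{i\geq1}A[-i]$, the tower is $k\xleftarrow{0}k\xleftarrow{0}\cdots$: its limit and its $\varprojlim^1$ do vanish, but not because of stabilisation. Worse, even levelwise Mittag--Leffler would not rescue the argument, since $\varprojlim$ and $\varprojlim^1$ do not commute with coproducts: there are families of eventually-zero towers, each with vanishing $\varprojlim^1$, whose direct sum has $\varprojlim^1\cong\prod_i k\big/\bigoplus_i k\neq 0$. So the Milnor-sequence mechanism you propose cannot by itself deliver that $\Hom_{\bfK(A)}(DQ,-)$ commutes with coproducts.

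What Neeman actually does at this point is different, and it is worth recording because your truncation triangle is still the right first move --- but used once, not as a tower. One triangle splits $Q$ into a bounded complex of finitely generated projectives (whose dual is perfect, hence compact) and a shift of a resolution $P$ of a single finitely presented module $C$ by finitely generated projectives. For the latter one proves that for every complex $Y$ of \emph{flat} modules the natural map $\mathrm{Tot}^{\Pi}(P\otimes_A Y)\to C\otimes_A Y$ is a quasi-isomorphism: the augmented double complex has exact rows because each $Y^q$ is flat, and the product totalisation of a double complex with exact rows bounded on one side is acyclic (a staircase argument that converges in the product, though not in the coproduct). Hence $\Hom_{\bfK(A)}(DP,Y)\cong H^0(C\otimes_A Y)$ naturally in $Y$, and this functor visibly commutes with coproducts. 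Your generation step has a gap of the same kind: orthogonality of $X$ to all $DQ$ yields precisely that $X$ is acyclic and $C\otimes_A X$ is acyclic for every finitely presented $C$, i.e.\ that $X$ is pure acyclic; to conclude $X\cong 0$ one needs the theorem that a pure acyclic complex of projectives is contractible, which is one of the main results of \cite{Ne2008} (Theorem~8.6 and the results supporting it), not a ``limiting argument'' that can be left implicit. In short, your reductions and the final identification of the compacts are fine, but the two steps carrying all the weight rest, respectively, on a false stabilisation claim and on an unacknowledged appeal to a major theorem.
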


Let $\A$ be a locally finitely presented additive category. In
\cite{CB}, Crawley-Boevey showed that $\A$ admits set-indexed products
iff $\fp\A$ admits pseudo-cokernels iff the category
$\widecheck{\fp\A}$ is abelian, where $\widecheck\C=(\mod\C^\op)^\op$
for any additive category $\C$.

\begin{thm}\label{th:pure-der}
Let $\A$ be a locally finitely presented additive category, endowed
with the pure exact structure. Suppose that $\A$ admits set-indexed
products.  Then the derived category $\bfD(\A)$ is a compactly
generated triangulated category and the inclusion $\fp\A\to\A$ induces
an equivalence
$$\bfD^b(\widecheck{\fp\A})\xto{\sim}\bfD(\A)^c.$$
\end{thm}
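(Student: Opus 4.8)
The plan is to transport the whole question to Neeman's homotopy category $\bfK(\Proj\C)$ of projectives over the ring with several objects $\C=\fp\A$, and then to read off the compact objects from Proposition~\ref{pr:comp}. The starting point is that the hypothesis is exactly what is needed to apply that proposition: by the characterisation of Crawley-Boevey recalled above, $\A$ admits set-indexed products if and only if $\widecheck\C=(\mod\C^\op)^\op$ is abelian, equivalently $\mod\C^\op$ is abelian, that is, $\C^\op$ is coherent. Moreover, as in the proof of Theorem~\ref{th:pure}, the functor $X\mapsto\Hom_\A(-,X)|_\C$ identifies $\A$ with $\Flat\C$ and $\Proj\A$ with $\Proj\C$, so that Theorem~\ref{th:pure} provides an equivalence $\bfK(\Proj\C)\xto{\sim}\bfD(\A)$.

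With these identifications I would apply Proposition~\ref{pr:comp} to $A=\C$. Coherence of $\C^\op$ gives that $\bfK(\Proj\C)$ is compactly generated, with $\bfK(\Proj\C)^c\simeq\bfD^b(\mod\C^\op)^\op$; transporting along Theorem~\ref{th:pure} shows that $\bfD(\A)$ is compactly generated and $\bfD(\A)^c\simeq\bfD^b(\mod\C^\op)^\op$, which already yields the first assertion. To reach the stated form I would use, for any abelian category $\B$, the standard self-duality $\bfD^b(\B)^\op\xto{\sim}\bfD^b(\B^\op)$ given by reversing arrows and negating cohomological degree; taking $\B=\mod\C^\op$ turns the right-hand side into $\bfD^b((\mod\C^\op)^\op)=\bfD^b(\widecheck{\fp\A})$. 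Composing the three equivalences yields $\bfD^b(\widecheck{\fp\A})\xto{\sim}\bfD(\A)^c$.

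The point that needs genuine care --- and where I expect the main difficulty --- is to identify this abstract composite with a functor canonically \emph{induced by the inclusion} $\fp\A\to\A$, as the statement asserts. Here I would exploit that the equivalence of Proposition~\ref{pr:comp} is the explicit functor $\Hom_{\C^\op}(-,\C)$ evaluated on projective resolutions, and check that precomposing it with the Yoneda embedding $\fp\A=\C\to\widecheck\C$ recovers $\fp\A\to\A\to\bfD(\A)$. Concretely, $c\in\fp\A$ goes under Yoneda to the representable $\Hom_\C(c,-)$, which the self-duality matches with $\Hom_\C(c,-)\in\proj\C^\op\subseteq\mod\C^\op$; the functor $\Hom_{\C^\op}(-,\C)$ then carries this, by the Yoneda lemma, to the representable $\C$-module $\Hom_\C(-,c)$, which under $\Flat\C\simeq\A$ is precisely $c$ seen in $\A$. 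Thus the composite equivalence extends, along the canonical embedding $\fp\A\to\bfD^b(\widecheck{\fp\A})$, the functor induced by the inclusion; this is the compatibility asserted in the theorem. Everything else is bookkeeping --- keeping the two dualities and the identification $\Proj\A=\Proj\C$ mutually compatible --- the substantive content being supplied entirely by Theorem~\ref{th:pure} and Proposition~\ref{pr:comp}.
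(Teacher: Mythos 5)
Your proposal is correct and follows essentially the same route as the paper's own (very terse) proof: identify $\C=\fp\A$ with a ring with several objects so that $\A\simeq\Flat\C$ and $\Proj\A=\Proj\C$, invoke the equivalence $\bfK(\Proj\A)\xto{\sim}\bfD(\A)$ of Theorem~\ref{th:pure}, and read off the compact objects from Proposition~\ref{pr:comp}, the hypothesis on products supplying exactly the coherence of $\C^\op$ via Crawley-Boevey's characterisation. In fact you supply details the paper leaves implicit --- the duality $\bfD^b(\B)^\op\simeq\bfD^b(\B^\op)$ needed to pass from $\bfD^b(\mod\C^\op)^\op$ to $\bfD^b(\widecheck{\fp\A})$, and the Yoneda computation showing the composite is induced by the inclusion $\fp\A\to\A$ --- and these checks are accurate.
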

\begin{proof} 
View $\C=\fp\A$ as a ring with several objects and identify the
category $\Proj\C$ of projective $\C$-modules with $\Proj\A$; see the
proof of Theorem~\ref{th:pure}. Now combine the equivalence
$\bfK(\Proj\A)\xto{\sim}\bfD(\A)$ from Theorem~\ref{th:pure} with the
description of the compact objects of $\bfK(\Proj\C)$ given in
Proposition~\ref{pr:comp}.
\end{proof}

\begin{rem}\label{re:pure-der}
Let $\A$ be a locally finitely presented additive category that is
cocomplete. Then the category $\C=\fp\A$ admits cokernels, and it
follows that each object in $\widecheck{\C}$ has injective dimension
at most two.  Thus the inclusion $\C\to\widecheck{\C}$ induces an
equivalence $\bfK^b(\C)\xto{\sim}\bfD^b(\widecheck{\C})$, and
therefore the inclusion $\fp\A\to\A$ induces an equivalence
$\bfK^b(\fp\A)\xto{\sim}\bfD(\A)^c$.
\end{rem}

Let us exhibit the pure derived category of a module category more
closely; see also \cite{ChHo,Ro2002}. Fix a ring $A$. The category
$\A=\Mod A$ of $A$-modules is locally finitely presented and
$\fp\A=\mod A$. Note that $\widecheck{\mod A}$ equals the \emph{free
abelian category} $\Ab(A)$ over $A$, where the ring $A$ is viewed as a
category with a single object \cite{Gr}. To be precise, the functor
$A\to\Ab(A)$ taking $A$ to $\Hom_A(A,-)$ has the property that any
additive functor $A\to\C$ to an abelian category extends uniquely, up
to a unique isomorphism, to an exact functor $\Ab(A)\to\C$.

We denote by $\bfD_\pure(\Mod A)$ the \emph{pure derived category} of
$\Mod A$, that is, the derived category with respect to the pure exact
structure on $\Mod A$. The usual derived category with respect to all
exact sequences in $\Mod A$ is denoted by $\bfD(\Mod A)$.  Recall that
$\bfD(\Mod A)$ is a compactly generated triangulated category with an
equivalence $\bfK^b(\proj A)\xto{\sim}\bfD(\Mod A)^c$ \cite{Kel1994}.
We have the following analogue for the pure derived category.

\begin{cor}\label{co:compgen}
Let $A$ be a ring.  The pure derived category $\bfD_\pure(\Mod A)$ is
a compactly generated triangulated category and the inclusions $\mod
A\to\Mod A$ and $\mod A\to\Ab (A)$ induce equivalences
$$\bfD^b(\Abel(A))\xleftarrow{\sim}\bfK^b(\mod
A)\xto{\sim}\bfD_\pure(\Mod A)^c.$$ The canonical functor
$\bfD_\pure(\Mod A)\to\bfD(\Mod A)$ admits left and right adjoints
that are fully faithful.  The left adjoint preserves compactness and
its restriction to compact objects identifies with the inclusion
$\bfK^b(\proj A)\to\bfK^b(\mod A)$.
\end{cor}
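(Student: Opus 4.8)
The first assertions require no new work. The category $\Mod A$ is locally finitely presented with $\fp(\Mod A)=\mod A$, it is cocomplete, and it admits set-indexed products, while $\widecheck{\mod A}=\Ab(A)$. Thus Theorem~\ref{th:pure-der} applies and shows that $\bfD_\pure(\Mod A)$ is compactly generated with $\bfD^b(\Ab(A))\xto{\sim}\bfD_\pure(\Mod A)^c$, and Remark~\ref{re:pure-der} refines this to the two equivalences $\bfD^b(\Ab(A))\xleftarrow{\sim}\bfK^b(\mod A)\xto{\sim}\bfD_\pure(\Mod A)^c$ induced by the stated inclusions. Since every pure-exact sequence is exact, we have $\bfK_\pac(\Mod A)\subseteq\bfK_\ac(\Mod A)$, so the canonical functor $\can\colon\bfD_\pure(\Mod A)\to\bfD(\Mod A)$ is the Verdier quotient by $\bfK_\ac(\Mod A)/\bfK_\pac(\Mod A)$. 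In particular $\can$ preserves coproducts, and any left or right adjoint of a Verdier quotient is automatically fully faithful.

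For the right adjoint I would invoke Brown representability: $\bfD_\pure(\Mod A)$ is compactly generated and $\can$ preserves coproducts, so $\can$ has a right adjoint $\rho$ \cite{Ne}, and $\rho$ is fully faithful because $\can$ is a quotient functor.

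The left adjoint is the part carrying real content, and I would route it through the compact objects. The key observation is that a bounded complex $X$ of finitely generated projective modules is orthogonal to acyclic complexes in \emph{both} exact structures: for a pure-acyclic complex $W$ and a finitely generated projective $P$ the complex $\Hom_A(P,W)$ is acyclic, since $P$ is finitely presented, whence $\Hom_{\bfK(\Mod A)}(X,W)=0$ by d\'evissage; and the same vanishing holds for honestly acyclic $W$ because $X$ is $K$-projective. Consequently, for $X\in\bfK^b(\proj A)$ and any complex $Z$ the localization maps are isomorphisms and yield
$$\Hom_{\bfD_\pure(\Mod A)}(X,Z)\cong\Hom_{\bfK(\Mod A)}(X,Z)\cong\Hom_{\bfD(\Mod A)}(X,\can Z).$$
The inclusion $\proj A\hookrightarrow\mod A$ induces $\iota\colon\bfD(\Mod A)^c=\bfK^b(\proj A)\to\bfK^b(\mod A)=\bfD_\pure(\Mod A)^c$, which I extend to a coproduct-preserving exact functor $\lambda\colon\bfD(\Mod A)\to\bfD_\pure(\Mod A)$ using the universal property of the compactly generated category $\bfD(\Mod A)$ \cite{Ne}. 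The displayed isomorphism says exactly that $\Hom_{\bfD_\pure}(\lambda X,Z)\cong\Hom_{\bfD}(X,\can Z)$ for compact $X$; as both sides are cohomological in $X$ and send coproducts to products, and the compact objects generate $\bfD(\Mod A)$, this isomorphism propagates to all $X$ and exhibits $\lambda\dashv\can$. By construction $\lambda$ preserves compactness and restricts on compacts to $\bfK^b(\proj A)\to\bfK^b(\mod A)$; finally $\can\lambda$ is a coproduct-preserving endofunctor of $\bfD(\Mod A)$ equal to the identity on compacts, so $\can\lambda\cong\id$, the unit is invertible, and $\lambda$ is fully faithful.

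The two appeals to Brown representability and the Verdier-quotient bookkeeping are routine. The step I expect to be the main obstacle is verifying that the coproduct-preserving extension $\lambda$ of the compact inclusion is genuinely \emph{adjoint} to $\can$, rather than merely a functor with the right restriction to compacts. Everything there is forced by the single computation above: that bounded complexes of finitely generated projectives cannot distinguish pure-acyclic from acyclic complexes, which is precisely what makes the pure and the ordinary $\Hom$-groups coincide and pins down the adjunction.
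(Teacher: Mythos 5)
Your opening paragraph and your treatment of the right adjoint coincide with the paper's argument: compact generation and the identification of compacts come from Theorem~\ref{th:pure-der} together with Remark~\ref{re:pure-der}, the right adjoint comes from Brown representability applied to the coproduct-preserving quotient functor, and full faithfulness of any adjoint comes from the quotient-functor property \cite[Proposition~I.1.3]{GZ}. Your key computation is also correct: a bounded complex of finitely generated projectives is left orthogonal in $\bfK(\Mod A)$ both to acyclic and to pure-acyclic complexes, so the maps $\Hom_{\bfK(\Mod A)}(X,Z)\to\Hom_{\bfD_\pure(\Mod A)}(X,Z)$ and $\Hom_{\bfK(\Mod A)}(X,Z)\to\Hom_{\bfD(\Mod A)}(X,Z)$ are bijective for such $X$. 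This is, in disguise, the paper's identification of both derived categories as left orthogonals ${}^{\perp}\bfK_\pac(\Mod A)$ and ${}^{\perp}\bfK_\ac(\Mod A)$ inside $\bfK(\Mod A)$ via Theorem~\ref{th:pure}.

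The gap is in the construction of $\lambda$. There is no ``universal property of the compactly generated category $\bfD(\Mod A)$'' that extends an exact functor defined on compact objects to a coproduct-preserving exact functor on the whole category; no such statement exists in \cite{Ne}, and it cannot exist at this level of generality because cones are not functorial (this is exactly the problem that dg-enhancements and derived Morita theory are designed to solve, and even there one must lift the functor on compacts to the enhancement). Your subsequent ``propagation'' step inherits the same defect: to pass from an isomorphism $\Hom_{\bfD_\pure(\Mod A)}(\lambda X,Z)\cong\Hom_{\bfD(\Mod A)}(X,\can Z)$ on compact $X$ to all $X$, you need a natural transformation between the two functors defined on \emph{all} of $\bfD(\Mod A)$ (then the objects on which it is invertible form a localizing subcategory containing the compacts); a pointwise coincidence on compacts provides no such comparison map, and an abstractly chosen extension $\lambda$ comes with none. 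Two repairs are available. The paper's: the quotient functor preserves products as well as coproducts, and since $\bfD_\pure(\Mod A)$ is compactly generated it satisfies Brown representability for the dual, which yields the left adjoint at once; its restriction to compacts is then read off from the identification in $\bfK(\Mod A)$ above, where the left adjoint becomes the inclusion ${}^{\perp}\bfK_\ac(\Mod A)\subseteq{}^{\perp}\bfK_\pac(\Mod A)$, sending $\bfK^b(\proj A)$ into $\bfK^b(\mod A)$. Alternatively, your own computation can be salvaged without ever constructing $\lambda$ as an extension: it supplies, for each compact $X$, an object $\lambda X$ and a unit $X\to\can\lambda X$ inducing the adjunction isomorphism, and the class of objects admitting such a pair is closed under shifts, coproducts and triangles, hence is all of $\bfD(\Mod A)$; a left adjoint defined objectwise in this way is automatically a functor.
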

\begin{proof}
Let us write $\A=\Mod A$.  The fact that $\bfD_\pure(\A)$ is compactly
generated and the description of the compact objects follow from
Theorem~\ref{th:pure-der} and Remark~\ref{re:pure-der}.  The canonical
functor
$$F\colon
\bfD_\pure(\A)=\bfK(\A)/\bfK_\pac(\A)\lto\bfK(\A)/\bfK_\ac(\A)=\bfD(\A)$$
preserves set-indexed (co)products and admits therefore a left adjoint
and a right adjoint, by Brown representability. These adjoints are
fully faithful since $F$ is a quotient functor
\cite[Proposition~I.1.3]{GZ}. The left adjoint preserves compactness
since $F$ preserves set-indexed coproducts.  Using
Theorem~\ref{th:pure}, we may identify in $\bfK(\A)$
$$\bfD_\pure(\A)={^\perp\bfK_\pac(\A)}\quad\text{and}\quad\bfD(\A)={^\perp\bfK_\ac(\A)}.$$
With this identification, the left adjoint of $F$ embeds $\bfK^b(\proj
A)$ into $\bfK^b(\mod A)$.
\end{proof}

\begin{rem}
The derived categories $\bfD(\Mod A)$ and $\bfD_\pure(\Mod A)$ are two
extremes. More precisely, the exact structures on $\Mod A$ are
partially ordered by inclusion. The natural exact structure given by
all kernel-cokernel pairs is the unique maximal one, while the pure
exact structure is the smallest exact structure such that exact
sequences are closed under filtered colimits.
\end{rem}

Given any ring $A$, we  view the category $\mod A$ as a
ring with several objects and denote it by $\widehat A$.

\begin{cor}\label{co:pder}
Let $A$ be a ring.  The fully faithful functor $\Mod A\to \Mod\widehat
A$ sending $X$ to $\Hom_A(-,X)|_{\widehat A}$ induces an equivalence
$\bfD_\pure(\Mod A)\xto{\sim} \bfD(\Mod\widehat A)$.
\end{cor}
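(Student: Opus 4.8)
The plan is to show that the restricted Yoneda functor $F\colon\Mod A\to\Mod\widehat A$, $X\mapsto\Hom_A(-,X)|_{\widehat A}$, induces the asserted equivalence by comparing compact objects on the two sides. Throughout I write $\C=\mod A=\widehat A$, so that $\Mod\widehat A=\Mod\C$ is a functor category in which kernels, cokernels and hence exactness are computed objectwise. Since $F(X)$ evaluated at $C\in\C$ is $\Hom_A(C,X)$, a sequence in $\Mod A$ is pure exact precisely when its image under $F$ is exact in $\Mod\C$; moreover $F$ is fully faithful, preserves filtered colimits and coproducts, and identifies $\Mod A$ with the flat $\C$-modules $\Flat\C$ \cite[\S1.4]{CB}. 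Consequently $\bfK(F)$ carries $\bfK_\pac(\Mod A)$ into $\bfK_\ac(\Mod\C)$ and descends to a coproduct-preserving triangulated functor $\bar F\colon\bfD_\pure(\Mod A)\to\bfD(\Mod\widehat A)$, which is the functor in the statement.

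First I would record that both sides are compactly generated and identify their compact objects with $\bfK^b(\mod A)$. On the left this is Corollary~\ref{co:compgen}: the stalk-complex embedding $\mod A\to\Mod A$ induces $\bfK^b(\mod A)\xto{\sim}\bfD_\pure(\Mod A)^c$. On the right I would apply \cite{Kel1994} to the ring with several objects $\widehat A$, giving $\bfK^b(\proj\widehat A)\xto{\sim}\bfD(\Mod\widehat A)^c$; since the finitely generated projective $\widehat A$-modules are exactly the summands of finite sums of representables $\Hom_{\widehat A}(-,M)$, Yoneda yields $\mod A\xto{\sim}\proj\widehat A$ and hence $\bfK^b(\mod A)\xto{\sim}\bfD(\Mod\widehat A)^c$.

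Next I would check that $\bar F$ restricts to an equivalence on compact objects. For $M\in\mod A$ the stalk complex maps under $\bar F$ to the stalk complex on $F(M)=\Hom_{\widehat A}(-,M)$, a representable and thus a projective $\widehat A$-module. For $M,N\in\mod A$ the groups $\Hom_{\bfD_\pure(\Mod A)}(M,N[i])$ vanish for $i\neq0$ and equal $\Hom_{\mod A}(M,N)$ for $i=0$, because finitely presented modules are pure projective; likewise $\Hom_{\bfD(\Mod\widehat A)}(\Hom_{\widehat A}(-,M),\Hom_{\widehat A}(-,N)[i])=\Ext^i_{\widehat A}(\Hom_{\widehat A}(-,M),\Hom_{\widehat A}(-,N))$ is concentrated in degree zero, where Yoneda identifies it with $\Hom_{\mod A}(M,N)$, because representables are projective. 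As $\bar F$ realises precisely this Yoneda isomorphism, it is fully faithful on the stalk complexes $\mod A$ and all their shifts; a d\'evissage along triangles over the thick closures then shows that $\bar F\colon\bfD_\pure(\Mod A)^c\to\bfD(\Mod\widehat A)^c$ is fully faithful, and it is essentially surjective since its image is thick and contains every representable $\Hom_{\widehat A}(-,M)$.

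Finally, since $\bar F$ preserves coproducts and restricts to an equivalence between the compact objects, the standard criterion for compactly generated categories \cite{Ne} shows that $\bar F$ is an equivalence. I expect the main obstacle to be the third step: reconciling the two a priori different descriptions of the compact objects---bounded complexes of pure projective modules on the pure side, and perfect complexes of projective $\widehat A$-modules on the functor-category side---and verifying that $\bar F$ implements the Yoneda identification between them. The Hom-computation above, resting on the pure projectivity of finitely presented modules and the projectivity of representable functors, is what makes this comparison go through; the remaining points (objectwise exactness in $\Mod\C$ matching purity, and preservation of coproducts) are routine.
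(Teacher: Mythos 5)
Your proposal is correct and follows essentially the same route as the paper: the paper likewise notes that $F$ sends pure exact sequences to exact sequences, invokes the description of compact objects (Corollary~\ref{co:compgen} on the left, Keller's $\bfK^b(\proj\widehat A)\simeq\bfD(\Mod\widehat A)^c$ together with the Yoneda identification $\mod A\simeq\proj\widehat A$ on the right) to see that $F$ restricts to an equivalence on compacts, and concludes by the standard d\'evissage argument for coproduct-preserving functors between compactly generated categories. You have merely filled in the details (pure projectivity of finitely presented modules, projectivity of representables, the Hom-computations) that the paper leaves implicit.
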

\begin{proof}
The functor $\Mod A\to \Mod\widehat A$ sends pure exact sequences to
exact sequences and induces therefore an exact functor $F\colon
\bfD_\pure(\Mod A)\to \bfD(\Mod\widehat A)$. The description of the
compact objects in Corollary~\ref{co:compgen} implies that $F$
restricts to an equivalence between the subcategories of compact
objects; thus $F$ is an equivalence by a standard devisage argument.
\end{proof}

It seems interesting to find out when two rings $A$ and $B$ have
equivalent pure derived categories. In view of
Corollary~\ref{co:pder}, this reduces to the question when $\widehat
A$ and $\widehat B$ have equivalent derived categories; thus tilting
theory applies \cite{Ri,Ke}.

\begin{exm}
Fix a field $k$ and consider the path algebras $A=k\varGamma$ and
$B=k\varDelta$ of the following quivers.
\[\xymatrix @!0 @R=1.8em @C=2.6em {
 &\circ\ar[rd]&&&&&\circ\ar[rd]\\
 \varGamma\colon&&\circ\ar[r]&\circ &&\varDelta\colon&&\circ&\ar[l]\circ\\
&\circ\ar[ru]&&&&&\circ\ar[ru]}\]
Note that both algebras are of finite representation type. Thus
$\widehat A$ and $\widehat B$ are each Morita equivalent to their
associated Auslander algebra; see \cite{Be1992} for unexplained terminology.

It follows from work of Ladkani \cite{La2009} that $\widehat A$ and
$\widehat B$ are both derived equivalent to the incidence algebra $kP$ of
the poset $P=D_4\times A_3$.  Thus we have equivalences
$$\bfD_\pure(\Mod A)\xto{\sim}\bfD(\Mod
kP)\xleftarrow{\sim}\bfD_\pure(\Mod B)$$ even though the categories
$\Mod A$ and $\Mod B$ are not equivalent.
\end{exm}

\section{Left approximations and left adjoints}

In this section we discuss briefly the existence of left
approximations and left adjoints. In fact, most results are parallel
to those previously obtained for right approximations and right
adjoints. Given any category $\A$ and a full subcategory $\B$, a
morphism $f\colon X\to Y$ in $\A$ is called \emph{left
$\B$-approximation} of $X$ if $Y$ belongs to $\B$ and every morphism
$X\to Y'$ with $Y'$ in $\B$ factors through $f$.

The following is the principal existence result for left
approximations; it is the analogue of Theorem~\ref{th:approx}. The
result is well-known for subcategories of a module category that are
closed under $\aleph_0$-pure submodules \cite{Ki,RS}.

\begin{prop}\label{pr:leftapprox}
Let $\A$ be a locally presentable category and $\B$ be a full
subcategory. Suppose that $\B$ is closed under set-indexed products
and $\a$-pure subobjects for some regular cardinal $\a$.  Then each
object in $\A$ admits a left $\B$-approximation.
\end{prop}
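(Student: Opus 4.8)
The plan is to prove Proposition~\ref{pr:leftapprox} by dualising the strategy used for Theorem~\ref{th:approx}, exploiting the fact that left approximations are the formal dual of right approximations and that the set-indexed products now play the role that coproducts played before. First I would fix an object $X$ in $\A$ and aim to construct a left $\B$-approximation out of $X$ as a suitable product of ``maximal'' morphisms $X\to Y_i$ with $Y_i$ in $\B$. The key is to find a set-indexed family of such morphisms through which every morphism $X\to Y'$ with $Y'$ in $\B$ factors; the product $X\to\prod_i Y_i$ is then the desired left $\B$-approximation, since $\B$ is closed under set-indexed products.

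The crucial technical step is the dual cardinality bound. For right approximations one used Proposition~\ref{pr:ElBashir} to see that the $\a$-terminal quotients of a fixed object form a set; here I would instead need that the relevant ``$\a$-terminal'' \emph{extensions} of $X$ — that is, morphisms $X\to Y$ that cannot be extended along a proper $\a$-pure monomorphism $Y\to Y'$ with $Y'$ in $\B$ — form a set up to isomorphism. To obtain this I would again pass through the fully faithful embedding $F\colon\A\to\Mod\C$ from the Corollary to Proposition~\ref{pr:ElBashir}, where $\C$ is the category of $\b$-presentable objects for a fixed $\b$ with $\A$ locally $\b$-presentable and $\a\ge\b$. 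Working in the module category $\Mod\C$, a cardinality argument in the spirit of El Bashir (via \cite{BEE}) bounds the size of an object $Y$ that admits no proper $\a$-pure extension compatible with a given morphism out of $X$; this yields the desired set of isomorphism classes.

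Next I would run a Zorn's lemma argument dual to the one in the proof of Theorem~\ref{th:approx}. Given a morphism $f\colon X\to Y'$ with $Y'$ in $\B$, I consider factorisations $X\xto{u} Y\xto{v} Y'$ with $Y$ in $\B$ and $v$ an $\a$-pure monomorphism, partially ordered so that an upper bound of a chain is obtained by taking the relevant limit (using closure of $\B$ under products, hence under the limits built from them). Maximality then forces the morphism $X\to Y$ to be ``$\a$-terminal'' in the appropriate dual sense, and the cardinality bound ensures a representative set $X\to Y_i$ ($i\in I$) of such morphisms exists. Assembling these into $X\to\prod_{i\in I} Y_i$, which lands in $\B$ by the product hypothesis, gives a morphism through which every $f$ factors, completing the argument.

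The main obstacle I expect is the dual cardinality bound. The coproduct version in Theorem~\ref{th:approx} rested squarely on El Bashir's theorem about $\a$-terminal quotients forming a set, and purity interacts much more naturally with quotients and colimits than with subobjects and limits. In particular, closure of $\B$ under $\a$-pure subobjects is the hypothesis available, but the left-approximation construction wants to control $\a$-pure monomorphisms \emph{out of} $X$ into objects of $\B$, and it is not immediate that the relevant maximal extensions are bounded in cardinality. Making the transfer to $\Mod\C$ genuinely produce such a bound — ensuring that $F$ sends the dual $\a$-terminal morphisms to objects of controlled size, and that closure under $\a$-pure subobjects survives under $F$ as required — is the delicate point that the proof must address with care.
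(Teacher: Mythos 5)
There is a genuine gap, and it sits exactly where you dualised the Zorn's lemma step of Theorem~\ref{th:approx}. In the right-approximation proof, chains of factorisations $X\xto{u}X'\xto{v}Y$ have upper bounds because they are \emph{filtered colimits}: colimits of epimorphisms are epimorphisms and $\B$ is assumed closed under filtered colimits. Your dual chains are descending chains of factorisations $X\xto{u}Y\xto{v}Y'$ with $v$ an $\a$-pure monomorphism, and an upper bound would have to be the inverse limit (intersection) of the chain of subobjects $Y_i\to Y'$. Closure of $\B$ under set-indexed products does \emph{not} give closure under such limits --- a limit over a cochain is built from products \emph{and} equalizers, and $\B$ is not assumed closed under equalizers. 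Worse, purity itself is not stable under this operation: already for modules, intersections of pure submodules need not be pure (e.g.\ two direct summands of $\bbZ/p^2\oplus\bbZ/p$ can intersect in a non-pure subgroup), so even if the limit existed in $\B$ there would be no reason for the resulting morphism into $Y'$ to remain an $\a$-pure monomorphism, which your maximality argument requires. So the poset you propose need not have upper bounds for chains, and Zorn does not apply.

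The second problem is that the ``dual cardinality bound'' you flag as the delicate point is in fact a standard, citable result --- and once it is invoked, the entire Zorn/El Bashir apparatus becomes unnecessary. This is what the paper does: by \cite[Theorem~2.33]{AdRo}, if $\A$ is locally $\a$-presentable and $X$ is $\b$-presentable for some $\b\ge\a$, then \emph{every} morphism $X\to Y$ factors through an $\a$-pure monomorphism $Y'\to Y$ with $Y'$ $\b$-presentable. Since $\B$ is closed under $\a$-pure subobjects, such $Y'$ lies in $\B$; choosing a representative set of morphisms $f_i\colon X\to Y_i$ with $Y_i$ in $\B$ and $\b$-presentable, the induced morphism $X\to\prod_{i\in I}Y_i$ is already a left $\B$-approximation, with no maximality argument and no passage to $\Mod\C$. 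Note also that \cite[Theorem~5]{BEE} is not the right tool for your bound: it produces pure submodules of a \emph{large source} on which a given map vanishes (bounding sources of $\a$-terminal morphisms), whereas what you need is a L\"owenheim--Skolem bound on $\a$-pure subobjects of the \emph{target} containing a given image, which is precisely the content of \cite[Theorem~2.33]{AdRo}. Your guiding intuition is inverted: in locally presentable categories the pure-subobject side is the classical, well-behaved one, and it is the pure-quotient side needed for Theorem~\ref{th:approx} that requires the genuinely hard input of El Bashir.
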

\begin{proof}
Fix an object $X$ in $\A$.  We may assume that $\A$ is locally
$\a$-presentable and that $X$ is $\b$-presentable for some regular
cardinal $\b\ge\a$. In \cite[Theorem~2.33]{AdRo} it is shown that each
morphism $X\to Y$ factors through an $\a$-pure monomorphism $Y'\to Y$
such that $Y'$ is $\b$-presentable.  Choose a representative set of
morphisms $f_i\colon X\to Y_i$ ($i\in I$) with $Y_i$ in $\B$ and
$\b$-presentable.  Then it follows from the assumption on $\B$ that
the induced morphism $X\to\prod_{i\in I} Y_i$ is a left
$\B$-approximation.
\end{proof}

The next lemma will be used in some of the following applications.

\begin{lem}\label{le:ext}
Let $\A$ be a locally presentable abelian category and $Z$ an object
in $\A$. Then there exists a regular cardinal $\a$ such that the
kernel of $\Ext^1_\A(Z,-)$ is closed under $\a$-pure subobjects.
\end{lem}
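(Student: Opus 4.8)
The plan is to let $\B$ be the full subcategory of all objects $Y$ with $\Ext^1_\A(Z,Y)=0$ and to find a regular cardinal $\a$ such that an $\a$-pure monomorphism $Y'\to Y$ with $Y$ in $\B$ forces $Y'$ into $\B$. Since $\A$ is locally presentable, every object is $\g$-presentable for some regular cardinal $\g$; so I would first fix a regular cardinal $\a$ large enough that $\A$ is locally $\a$-presentable and $Z$ is $\a$-presentable. The purpose of this choice is to be able to lift morphisms out of $Z$ along $\a$-pure epimorphisms.

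The next step is to convert an $\a$-pure monomorphism into a short exact sequence. Recall that a locally presentable abelian category is a Grothendieck category, so filtered colimits are exact. Hence an $\a$-pure monomorphism $Y'\to Y$, being an $\a$-filtered colimit of split monomorphisms $Y_i'\to Y_i$, is a genuine monomorphism; and since cokernels commute with colimits, its cokernel $Y\to Y''$ is the corresponding $\a$-filtered colimit of the split epimorphisms $Y_i\to\Coker(Y_i'\to Y_i)$. Thus one obtains a short exact sequence $0\to Y'\to Y\to Y''\to 0$ in which $Y\to Y''$ is an $\a$-pure epimorphism.

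I would then apply $\Hom_\A(Z,-)$ and read off the relevant piece of the long exact sequence for $\Ext^1_\A(Z,-)$, namely
$$\Hom_\A(Z,Y)\lto\Hom_\A(Z,Y'')\lto\Ext^1_\A(Z,Y')\lto\Ext^1_\A(Z,Y).$$
Because $Z$ is $\a$-presentable and $Y\to Y''$ is an $\a$-pure epimorphism, the map $\Hom_\A(Z,Y)\to\Hom_\A(Z,Y'')$ is surjective, by the characterisation of $\a$-pure epimorphisms recalled after Theorem~\ref{th:approx}. As $Y$ lies in $\B$ we have $\Ext^1_\A(Z,Y)=0$, and exactness then forces $\Ext^1_\A(Z,Y')=0$, that is, $Y'$ lies in $\B$.

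The step I expect to be the main obstacle is the second one: I must be sure that the cokernel of an $\a$-pure monomorphism is itself $\a$-pure (as an epimorphism), which is exactly where the exactness of filtered colimits in the Grothendieck category $\A$ enters, and that the abstract test-object characterisation of $\a$-pure epimorphisms applies to the single object $Z$ once $Z$ is $\a$-presentable. The remaining argument is a formal diagram chase in the $\Ext$ long exact sequence and requires no further input.
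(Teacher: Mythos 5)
Your argument is essentially correct and takes a genuinely different route from the paper's, but one justification in it is false as stated and must be repaired. The claim ``a locally presentable abelian category is a Grothendieck category, so filtered colimits are exact'' is wrong: locally presentable abelian categories form a strictly larger class than Grothendieck categories (categories of contramodules in the sense of Positselski are locally presentable and abelian, yet directed colimits in them fail to be exact). What you actually need is weaker and is true: once $\a$ is chosen so that $\A$ is locally $\a$-presentable, \emph{$\a$-filtered} colimits commute with finite limits, in particular with kernels; this is \cite[Korollar~7.12]{GU}, the same fact invoked in the paper's Lemma~\ref{le:derived}(1). Hence if $Y'\to Y$ is an $\a$-filtered colimit of split monomorphisms $Y_i'\to Y_i$, then $\Ker(Y'\to Y)=\colim{i}\Ker(Y_i'\to Y_i)=0$, so $Y'\to Y$ is a monomorphism (alternatively, it is $\a$-pure in the standard sense and therefore a regular monomorphism by \cite[Proposition~2.31]{AdRo}). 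Note also that you misattribute the cokernel step: that $Y\to Y''$ is an $\a$-filtered colimit of the split epimorphisms $Y_i\to\Coker(Y_i'\to Y_i)$ uses only that cokernels, being colimits, commute with all colimits; no exactness of filtered colimits enters there. So the single nontrivial input is the monomorphism claim, and the correct source for it is the commutation of $\a$-filtered colimits with kernels in a locally $\a$-presentable category, not the Grothendieck property. (The $\Ext$-sequence must of course be read as the Yoneda one, since $\A$ need not have enough injectives or projectives; its six-term exactness holds in any abelian category.)

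With that repair, your proof is complete and differs from the paper's in an interesting way. The paper chooses $\a$ more carefully (so that kernels of morphisms from $\b$-presentable objects to $Z$ are $\a$-presentable), takes an arbitrary extension $0\to X\to Y\to Z\to 0$ with $X\to\bar X$ an $\a$-pure monomorphism and $\Ext^1_\A(Z,\bar X)=0$, presents that extension as a pushout of an extension with $\a$-presentable terms, and then splits it by combining the vanishing of $\Ext^1_\A(Z,\bar X)$ with the lifting property of the pure monomorphism. Your argument instead passes to the cokernel of the pure monomorphism, observes that it is an $\a$-pure epimorphism, and finishes with a formal chase in the sequence $\Hom_\A(Z,Y)\to\Hom_\A(Z,Y'')\to\Ext^1_\A(Z,Y')\to\Ext^1_\A(Z,Y)$. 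This needs only that $\A$ be locally $\a$-presentable and $Z$ be $\a$-presentable, so the choice of cardinal is simpler, and all the purity input is packaged into a single surjectivity statement rather than an explicit factorisation argument; the price is the (repairable) appeal to exactness properties of colimits discussed above.
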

\begin{proof}
Choose a regular cardinal $\b$ such that $\A$ is locally
$\b$-presentable and $Z$ is $\b$-presentable. Then there exists a
regular cardinal $\a\ge\b$ such that the kernel of each morphism $Y\to
Z$ from a $\b$-presentable object $Y$ is $\a$-presentable.  

Let $X\to \bar X$ be an $\a$-pure monomorphism such that
$\Ext_\A^1(Z,\bar X)=0$ and fix an exact sequence $\eta\colon 0\to
X\to Y\to Z\to 0$ in $\A$. The choice of $\a$ implies that the sequence
$\eta$ fits into a commutative diagram of the following form
$$\xymatrix{ 0\ar[r]&X'
\ar[r]\ar[d]&Y'\ar[r]\ar[d]&Z\ar[r]\ar@{=}[d]&0\\
0\ar[r]&X\ar[r]&Y\ar[r]&Z\ar[r]&0}$$ such that the upper row is exact
and consists of $\a$-presentable objects. The composite $X'\to X\to
\bar X$ factors through $X'\to Y'$, since $\Ext_\A^1(Z,\bar X)=0$. It
follows that $X'\to X$ factors through $X'\to Y'$, since $X\to \bar X$
is an $\a$-pure monomorphism. Thus the sequence $\eta$ splits, and we conclude that
$\Ext_\A^1(Z,X)=0$.
\end{proof}

The following example is our first application of
Proposition~\ref{pr:leftapprox}.

\begin{exm}
Let $\A$ be a locally presentable abelian category and $\C$ a set of objects.
Then the full subcategory
$$\C'=\{X\in\A\mid\Ext_\A^1(Z,X)=0\text{ for all }Z\in\C\}$$ is closed
under set-indexed products and $\a$-pure subobjects for some regular
cardinal $\a$, by Lemma~\ref{le:ext}. It follows from
Proposition~\ref{pr:leftapprox} that each object in $\A$ admits a left
$\C'$-approximation.
\end{exm}

As before, we extend the existence of approximations to homotopy categories.

\begin{cor}\label{co:leftadj}
Let $\A$ be a locally presentable additive category and $\B$ a full
additive subcategory. Suppose that $\B$ is closed under set-indexed
products and $\a$-pure subobjects for some regular cardinal
$\a$.  Then the inclusion $\bfK(\B)\to\bfK(\A)$ admits a left adjoint.
\end{cor}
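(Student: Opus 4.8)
The plan is to mirror the proof of Theorem~\ref{th:adj} exactly, replacing the right-adjoint machinery with its left-adjoint dual and feeding in Proposition~\ref{pr:leftapprox} in place of Theorem~\ref{th:approx}. First I would observe that since $\A$ is locally presentable, the category $\bfC(\A)$ of cochain complexes is again locally presentable, and all limits (hence set-indexed products) are computed degreewise. I would then view $\bfC(\B)$ as a full subcategory of $\bfC(\A)$ and check that it inherits the two hypotheses on $\B$: closure under set-indexed products follows because products in $\bfC(\A)$ are degreewise, and closure under $\a$-pure subobjects follows because $\a$-pure monomorphisms in $\bfC(\A)$ are also computed degreewise (the same degreewise argument used in the proof of Theorem~\ref{th:adj}).

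With these two facts in hand, the next step is to apply Proposition~\ref{pr:leftapprox} to the pair $\bfC(\B)\subseteq\bfC(\A)$, concluding that every object of $\bfC(\A)$ admits a left $\bfC(\B)$-approximation. As in the proof of Theorem~\ref{th:adj}, passing to the homotopy category, this yields that every object of $\bfK(\A)$ admits a left $\bfK(\B)$-approximation: a left approximation in $\bfC$ descends to a left approximation in $\bfK$ because the factorisation property is preserved under passage to homotopy classes. The final step is to invoke the left-adjoint analogue of Proposition~\ref{pr:N}, namely that for a full triangulated subcategory $\Sc$ of a triangulated category $\T$ with split idempotents, the inclusion $\Sc\to\T$ admits a left adjoint if and only if every object of $\T$ admits a left $\Sc$-approximation. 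Applying this to $\bfK(\B)\subseteq\bfK(\A)$ gives the desired left adjoint.

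The main obstacle, such as it is, lies in justifying the dual of Proposition~\ref{pr:N}: the cited result \cite[Proposition~1.4]{Ne2006} is stated for right adjoints and right approximations, and one must be sure its dual holds in the present setting. This is routine, since the statement and its proof dualise verbatim under passage to the opposite category, and both $\bfK(\B)$ and $\bfK(\A)$ have split idempotents (indeed $\bfK(\A)$ admits countable coproducts, so idempotents split by \cite[Proposition~1.6.8]{Ne}, and $\bfK(\B)$ is closed under the relevant summands). One should also verify that $\bfK(\B)$ is a full \emph{triangulated} subcategory of $\bfK(\A)$, which holds because $\B$, being closed under finite products, is additive and its associated homotopy category is closed under shifts and cones formed within complexes over $\B$. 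I therefore expect no serious difficulty; the corollary follows by systematically dualising the argument for Theorem~\ref{th:adj}, with Proposition~\ref{pr:leftapprox} supplying the existence of the needed left approximations.
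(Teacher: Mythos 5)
Your proposal is correct and follows essentially the same route as the paper: view $\bfC(\B)$ inside $\bfC(\A)$, use the degreewise computation of (co)limits to transfer closure under set-indexed products and $\a$-pure subobjects, apply Proposition~\ref{pr:leftapprox} to obtain left $\bfC(\B)$-approximations, and conclude via the dual of Proposition~\ref{pr:N}. The extra checks you flag (idempotent splitting, $\bfK(\B)$ being triangulated, descent of approximations to the homotopy category) are exactly the points the paper leaves implicit, and your justifications for them are sound.
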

\begin{proof}
We view $\bfC(\B)$ as a full subcategory of $\bfC(\A)$. (Co)limits in
$\bfC(\A)$ are computed degreewise and this implies that $\bfC(\B)$ is
closed under set-indexed products and $\a$-pure subobjects.  Thus
every object in $\bfC(\A)$ admits a left $\bfC(\B)$-approximation by
Proposition~\ref{pr:leftapprox}. Applying the dual statement of
Proposition~\ref{pr:N}, it follows that the inclusion
$\bfK(\B)\to\bfK(\A)$ admits a left adjoint.
\end{proof}

\begin{exm}
Let $\A$ be a Grothendieck abelian category. Then the full subcategory
$\Inj\A$ consisting of all injective objects is closed under $\a$-pure
subobject for some regular cardinal $\a$. This follows from
Lemma~\ref{le:ext} and a variant of Baer's criterion, because there is
a set of objects $\C$ such that an object $X$ in $\A$ is injective if
and only if $\Ext^1_\A(Z,X)=0$ for all $Z$ in $\C$. It follows from
Corollary~\ref{co:leftadj} that the inclusion
$\bfK(\Inj\A)\to\bfK(\A)$ admits a left adjoint.
\end{exm}

The following result about derived categories is the analogue of
Corollary~\ref{co:derived}; the proof is almost the same and therefore
left to the reader.

\begin{cor}\label{co:leftderived}
\pushQED{\qed} Let $\A$ be an exact category that is locally
presentable and let $\a$ be a regular cardinal.  Suppose that exact
sequences are closed under set-indexed products and $\a$-filtered
colimits. Let $\B$ be a full subcategory of $\A$ that is closed under
extensions, set-indexed products, and $\a$-pure subobjects. Then the
canonical functor $\bfK(\B)\to\bfD(\B)$ admits a fully faithful left
adjoint. In particular, the category $\bfD(\B)$ has small
Hom-sets.\qedhere
\end{cor}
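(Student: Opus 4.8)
The plan is to mimic the proof of Corollary~\ref{co:derived}, working dually and replacing right adjoints by left adjoints throughout. The statement of Corollary~\ref{co:leftderived} is exactly the left-handed analogue of Corollary~\ref{co:derived}, with filtered colimits replaced by set-indexed products, $\a$-pure quotients dropped, and the hypotheses on $\B$ strengthened to closure under products and $\a$-pure subobjects. So the strategy is to establish that the acyclic complexes $\bfC_\ac(\B)$ form a subcategory to which Proposition~\ref{pr:leftapprox} applies, deduce the existence of left $\bfC(\B)$-approximations, and then transfer this to the homotopy and derived categories via the dual of Proposition~\ref{pr:N} and a standard localisation argument.

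Concretely, I would first verify that $\bfC_\ac(\B)$, viewed as a full subcategory of $\bfC(\A)$, is closed under set-indexed products and under $\a$-pure subobjects. Closure under products uses that (co)limits in $\bfC(\A)$ are computed degreewise together with the hypothesis that exact sequences in $\A$ are closed under set-indexed products; this guarantees that a product of acyclic complexes with terms in $\B$ remains acyclic and has terms in $\B$. Closure under $\a$-pure subobjects is precisely the content already worked out in Lemma~\ref{le:derived} and in the first part of the proof of Corollary~\ref{co:derived}: one shows, using that exact sequences are closed under $\a$-filtered colimits, that isomorphisms and admissible monomorphisms in $\B$ are closed under $\a$-pure subobjects in $\Mor\A$, whence $\bfC_\ac(\B)$ inherits closure under $\a$-pure subobjects from the acyclicity criterion in terms of $\Coker d^{n-2}\to\Ker d^n$ and $\Ker d^n\to X^n$. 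These facts go through verbatim because they depend only on the colimit hypothesis, which is retained here.

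Once $\bfC_\ac(\B)$ satisfies the hypotheses of Proposition~\ref{pr:leftapprox}, every object of $\bfC(\B)$ admits a left $\bfC_\ac(\B)$-approximation, and passing to homotopy shows every object of $\bfK(\B)$ admits a left $\bfK_\ac(\B)$-approximation. Applying the dual of Proposition~\ref{pr:N} then yields a left adjoint to the inclusion $\bfK_\ac(\B)\to\bfK(\B)$, and the dual of the standard argument \cite[Proposition~9.1.18]{Ne} promotes this to a left adjoint of the quotient functor $\bfK(\B)\to\bfD(\B)$. Full faithfulness of the adjoint and smallness of the Hom-sets follow from \cite[Proposition~I.1.3]{GZ} exactly as in Theorem~\ref{th:derived}.

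The main subtlety is not any single deep step but keeping the dualisation coherent: one must be careful that all the purity machinery of Lemma~\ref{le:derived} is stated for $\a$-pure \emph{subobjects} (monomorphisms), so that nothing needs to be re-dualised there, while the approximation existence is now driven by products rather than coproducts. The potential friction point is confirming that the extension-closedness of $\B$ interacts correctly with products, i.e.\ that an admissible monomorphism in $\B$ remains such after passing to $\a$-pure subobjects when $\B$ is only assumed product-closed rather than colimit-closed; here one checks, as in Corollary~\ref{co:derived}, that a morphism in $\B$ is an admissible monomorphism iff it is one in $\A$ with cokernel in $\B$, and that both conditions are preserved. Since this is genuinely the same verification, the proof is indeed almost identical to that of Corollary~\ref{co:derived} and may reasonably be left to the reader.
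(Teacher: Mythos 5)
Your proposal is correct and follows exactly the route the paper intends: the paper itself states that the proof is ``almost the same'' as that of Corollary~\ref{co:derived} and leaves it to the reader, and your dualisation --- applying Proposition~\ref{pr:leftapprox} to $\bfC_\ac(\B)\subseteq\bfC(\A)$, reusing Lemma~\ref{le:derived} (which needs no dualising since it concerns $\a$-pure subobjects and only the retained $\a$-filtered colimit hypothesis), and then invoking the duals of Proposition~\ref{pr:N} and of \cite[Proposition~9.1.18]{Ne} together with \cite[Proposition~I.1.3]{GZ} --- is precisely that argument, including the ``extra care'' step about admissible monomorphisms in $\B$.
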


\begin{exm}
Let $\A$ be a locally $\a$-presentable additive category. Consider the
$\a$-pure exact structure, that is, a sequence $X\to Y\to Z$ of
morphisms in $\A$ is \emph{$\a$-pure exact} if the induced sequence
$0\to\Hom_\A(C,X)\to\Hom_\A(C,Y)\to\Hom_\A(C,Z)\to 0$ is exact for
each $\a$-presentable object $C$. The $\a$-pure exact sequences are
closed under set-indexed products and $\a$-filtered colimits. In fact,
a sequence is $\a$-pure exact if and only if it is an $\a$-filtered
colimit of split exact sequences. It follows from
Corollary~\ref{co:leftderived} that the canonical functor
$\bfK(\A)\to\bfD(\A)$ admits a left adjoint.
\end{exm}

\begin{exm}
Let $A$ be a ring. Consider the category $\Fpinj A$ of all
\emph{fp-injective} $A$-modules, that is, $A$-modules $X$ such that
$\Ext^1_A(-,X)$ vanishes on all finitely presented $A$-modules.  Note
that fp-injective modules are precisely the pure submodules of
injective modules, whereas flat modules are the pure quotients of
projective modules.  The fp-injective $A$-modules form an extension
closed subcategory of $\Mod A$ that is closed under set-indexed
products and pure submodules.  It follows from
Corollary~\ref{co:leftderived} that the canonical functor $\bfK(\Fpinj
A)\to\bfD(\Fpinj A)$ admits a left adjoint.
\end{exm}

Using fp-injective modules, a result from \cite{IK} takes the
following form. It seems appropriate to mention this because it
stimulated our interest in adjoint functors between homotopy
categories.
\medskip

\noindent\emph{Given any pair $A,B$ of noetherian rings that admit a
dualising complex $D$, there are equivalences
\[\xymatrix{
\bfK(\Proj A)\ar[r]^-\sim&\bfD(\Flat A)\ar@<.7ex>[rr]^-{-\otimes_A
D}&&\bfD(\Fpinj B)\ar@<.7ex>[ll]^-{\Hom_B(D,-)}&\bfK(\Inj
B)\ar[l]_-\sim }.\]}

Let us conclude this note with the following conjecture; it is in
analogue of results on flat modules in \cite{Ne2008}.
\begin{conj}\label{conj:coh}
Given any ring $A$, the composite $\bfK(\Inj
A)\xto{\inc}\bfK(\Fpinj A)\xto{\can}\bfD(\Fpinj A)$ is an
equivalence. If $A$ is coherent, then $\bfD(\Fpinj A)$ is compactly
generated and the composite $\bfD(\Fpinj A)\to\bfD_\pure(\Mod
A)\to\bfD(\Mod A)$ induces an equivalence $\bfD(\Fpinj
A)^c\xto{\sim}\bfD^b(\mod A)$.
\end{conj}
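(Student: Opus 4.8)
The plan is to mirror the flat/projective development of Section~4 in the injective/fp-injective setting, with $\Inj A$ playing the role of $\Proj A$ and $\Fpinj A$ that of $\Flat A$; in other words, one seeks to dualise Theorem~\ref{th:pure} and Proposition~\ref{pr:comp}. Write $\bfK_\ac(\Fpinj A)$ for the complexes of fp-injectives that are acyclic for the pure exact structure, so that $\bfD(\Fpinj A)=\bfK(\Fpinj A)/\bfK_\ac(\Fpinj A)$. First I would record that $\Fpinj A$ is locally presentable (it is closed under filtered colimits and $\a$-pure subobjects, hence accessible by Proposition~\ref{pr:acc}, and it is closed under products), so that Corollary~\ref{co:leftadj} applies to $\A=\Fpinj A$ and $\B=\Inj A$: since $\Inj A$ is closed under products and, by Lemma~\ref{le:ext} together with Baer's criterion, under $\a$-pure subobjects, the inclusion $\bfK(\Inj A)\to\bfK(\Fpinj A)$ admits a left adjoint $L$, exhibiting $\bfK(\Inj A)$ as the Verdier quotient $\bfK(\Fpinj A)/{}^\perp\bfK(\Inj A)$. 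The composite $\bfK(\Inj A)\hookrightarrow\bfK(\Fpinj A)\to\bfD(\Fpinj A)$ is then an equivalence precisely when
$$ {}^\perp\bfK(\Inj A)=\bfK_\ac(\Fpinj A), $$
that is, when a complex of fp-injectives is pure-acyclic if and only if every chain map from it into a complex of injectives is null-homotopic. The whole first assertion reduces to this orthogonality.

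The inclusion $\bfK_\ac(\Fpinj A)\subseteq{}^\perp\bfK(\Inj A)$ is the tractable half: it is the fp-injective shadow of the standard fact that a chain map out of a pure-acyclic complex into a complex of pure-injective modules is null-homotopic, which one proves by writing the pure-acyclic complex as a filtered colimit of contractible complexes and using that injective modules are pure-injective. The reverse inclusion ${}^\perp\bfK(\Inj A)\subseteq\bfK_\ac(\Fpinj A)$ is the heart of the matter; it is the exact analogue, for fp-injectives, of Neeman's theorem that $\bfK(\Proj A)^\perp=\bfK_\ac(\Flat A)$ \cite{Ne2008}. I would attempt it by Neeman's strategy: given $X\in{}^\perp\bfK(\Inj A)$, build a degreewise $\a$-pure monomorphism $X\to I$ into a complex of injectives with pure-acyclic cone, and use the vanishing $\Hom_{\bfK(\Fpinj A)}(X,I)=0$ to force $X$ itself to be pure-acyclic. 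One cannot simply transport Neeman's flat result through the character duality $(-)^+$, since $(-)^+$ sends injective left modules to flat --- not projective --- right modules, so the argument must be run natively on the fp-injective side. This is the step I expect to be the main obstacle, and presumably the reason the statement is only conjectured in general; for noetherian rings admitting a dualising complex it is already known through \cite{IK}.

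For the coherent case, assume $\mod A$ abelian. Granting the equivalence $\bfK(\Inj A)\xto{\sim}\bfD(\Fpinj A)$ just described, it suffices to establish the injective analogue of Proposition~\ref{pr:comp}, namely that for coherent $A$ the category $\bfK(\Inj A)$ is compactly generated with $\bfK(\Inj A)^c\simeq\bfD^b(\mod A)$ (due to Krause for noetherian $A$ and extending to the coherent case, or equivalently by dualising the J{\o}rgensen--Neeman description); transporting along the equivalence yields compact generation of $\bfD(\Fpinj A)$ together with $\bfD(\Fpinj A)^c\simeq\bfD^b(\mod A)$. Finally I would identify the composite $\bfD(\Fpinj A)\to\bfD_\pure(\Mod A)\to\bfD(\Mod A)$ on compact objects: the first functor is induced by $\Fpinj A\hookrightarrow\Mod A$ and, by Corollary~\ref{co:compgen}, carries compacts into $\bfD_\pure(\Mod A)^c\simeq\bfK^b(\mod A)$, and composing to $\bfD(\Mod A)$ while using that $\bfD^b(\mod A)\to\bfD(\Mod A)$ is fully faithful for coherent $A$, one checks that the resulting functor realises the abstract equivalence $\bfD(\Fpinj A)^c\simeq\bfD^b(\mod A)$ as the canonical embedding of $\bfD^b(\mod A)$ into $\bfD(\Mod A)$. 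The only genuinely hard ingredient in the whole programme remains the reverse orthogonality of the preceding paragraph.
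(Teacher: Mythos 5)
The statement you are trying to prove is presented in the paper as a \emph{conjecture}: the paper contains no proof of it, only the remark that it is the fp-injective analogue of Neeman's results on flat modules \cite{Ne2008}, a more general formulation in terms of $\Ex(\C^\op,\Ab)$, and the observation that this general form is known when every object of $\C$ is noetherian \cite{Kr2005} and in the pure case of Theorem~\ref{th:pure-der}. Your proposal does not supply a proof either; it is a reduction, and you say so yourself. Everything hinges on two unproven ingredients: the orthogonality ${}^\perp\bfK(\Inj A)=\bfK_\ac(\Fpinj A)$, which you correctly identify as the fp-injective analogue of Neeman's theorem $\bfK(\Proj A)^\perp=\bfK_\ac(\Flat A)$ and leave open, and, for the coherent case, the compact generation of $\bfK(\Inj A)$ with $\bfK(\Inj A)^c\simeq\bfD^b(\mod A)$, which you attribute to an ``extension to the coherent case'' of \cite{Kr2005}. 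That extension is not a citable result within the paper's horizon; it is again part of what is being conjectured (it is exactly the general form of the conjecture applied to $\C=\mod A$). So your programme correctly mirrors the framework the paper itself suggests --- dualising Theorem~\ref{th:pure} and Proposition~\ref{pr:comp} --- and correctly locates where the difficulty sits, but beyond the reduction it proves nothing.

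One step is wrong as written. You claim that $\Fpinj A$ is closed under filtered colimits, hence accessible by Proposition~\ref{pr:acc} and locally presentable, in order to apply Corollary~\ref{co:leftadj} with $\A=\Fpinj A$ and $\B=\Inj A$. By a theorem of Stenstr\"om, closure of $\Fpinj A$ under filtered colimits is \emph{equivalent} to $A$ being coherent, so in the first (general) part of the conjecture this premise fails. Note that the paper is careful on precisely this point: in the example preceding the conjecture it only uses that $\Fpinj A$ is extension closed and closed under set-indexed products and pure submodules \emph{inside} $\Mod A$, and invokes Corollary~\ref{co:leftderived}. Your construction can be repaired the same way: build the left adjoint of $\bfK(\Inj A)\to\bfK(\Mod A)$ using Corollary~\ref{co:leftadj} in the locally presentable category $\Mod A$ (closure of $\Inj A$ under $\a$-pure subobjects coming from Lemma~\ref{le:ext} and Baer's criterion), and then restrict it to the full subcategory $\bfK(\Fpinj A)$. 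With that repair your first paragraph is a correct reformulation of the first assertion of the conjecture as an orthogonality statement; the rest remains a programme whose missing steps are the conjecture itself.
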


This conjecture should be formulated more generally as follows. Let
$\C$ be a skeletally small abelian category and let
$\A=\Ex(\C^\op,\Ab)$ denote the category of exact functors
$\C^\op\to\Ab$. Categories of this form are ubiquitous
\cite{Kr1998,Pr2009}.  Note that $\A$ is the category of fp-injective
objects of the locally coherent Grothendieck category
$\Lex(\C^\op,\Ab)$ of left exact functors $\C^\op\to\Ab$.  The
category $\A$ admits set-indexed products and filtered
colimits. Moreover, $\A$ admits a canonical exact structure with
enough injective objects. To be precise, a sequence $X\to Y\to Z$ of
morphisms in $\A$ is \emph{exact} if the induced sequence $0\to ZC\to
YC\to XC\to 0$ of abelian groups is exact for all $C$ in $\C$.

The general form of the above conjecture then says that the canonical
functor $\bfK(\Inj\A)\to \bfD(\A)$ is an equivalence and that
$\bfD(\A)$ is compactly generated with an equivalence
$\bfD(\A)^c\xto{\sim}\bfD^b(\C)$. This conjecture specialises to
Conjecture~\ref{conj:coh} by taking for $\C$ the free abelian category
$\Ab(A)$ over a ring $A$; it has been proved in \cite{Kr2005} provided
that each object in $\C$ is noetherian; see Theorem~\ref{th:pure-der}
for another special case.

\end{document}